\documentclass[11pt]{extarticle}
\usepackage{enumerate}
\usepackage[spanish,english,activeacute]{babel}
\usepackage{amsmath}
\usepackage{amsfonts}
\usepackage{amssymb,times}
\usepackage{graphicx}
\usepackage{latexsym}
\usepackage{color}
\usepackage{pifont}
\usepackage{multicol}
\usepackage{appendix}
\usepackage{mathrsfs}
\usepackage[sort&compress]{natbib}
\newcommand{\subo}{\mathcal{S}}
\newcommand{\x}{\mathcal{X}}

\newcommand{\pad}{\right)}
\newcommand{\mayor}{>}
\newcommand{\menor}{<}
\newcommand{\EL}{\mathcal{L}}
\newcommand{\pai}{\left(}
\newcommand{\cd}{\right]}
\newcommand{\ci}{\left[}

\newcommand{\colachi}{\overline{\chi}}
\newcommand{\e}{\mathcal{E}}

\newcommand{\constC}{\prod_{i=1}^N\alpha_i^{n_i}\pai\prod_{j=1}^{m+1}\beta_j(q)\pad^{-1}}


\bibliographystyle{plainnat}
\usepackage{hyperref}
\textwidth=6.5in
\lineskip .25cm
\lineskiplimit .25cm
\textheight=9.5in
\topmargin=-.75in
\topskip=-4pt
\evensidemargin=-2pt
\oddsidemargin=-1pt
\parindent 0em
\parskip 1ex
\begin{document}
\title{$\,$\vskip-2cm\bf The distribution and asympotic behaviour of the negative Wiener-Hopf factor for L\'evy processes with rational positive jumps}
\author{ {\sc Ekaterina T. Kolkovska\/}\thanks{\'{A}rea de Probabilidad y Estad\'{\i}stica, Centro de Investigaci\'{o}n en Matem\'{a}ticas,
  Guanajuato,  Mexico.} \and
  {\sc Ehyter M. Mart\'{\i}n-Gonz\'{a}lez}\thanks{Departamento de Matem\'aticas, Universidad de Guanajuato,
  Guanajuato,  Mexico.} 
}
\date{ }
\maketitle
\vskip-1cm
\begin{abstract}
  We study the distribution of the negative Wiener-Hopf factor for a class of two-sided jumps L\'evy processes whose positive
  jumps have a rational Laplace transform. The positive Wiener-Hopf factor for this class of processes was studied by \cite{lewismordecki}.
  Here we obtain a formula for  the Laplace transform of the negative Wiener-Hopf factor, as well as an
   explicit expression for its probability density, which is in terms of sums of convolutions of known functions. Under
  additional regularity conditions on the L\'evy measure of the studied  processes, we also provide asymptotic results as $u\to-\infty$ for the distribution function $F(u)$
of the  negative Wiener-Hopf factor.
  We illustrate our results in some particular examples.
    \bigskip

  \noindent
  \textit{Keywords and phrases}: Two-sided jumps L\'evy process,
  Wiener-Hopf factorization, Negative Wiener-Hopf factor, L\'evy risk processes.
\end{abstract}

\newtheorem{cond}{Condition}
\newtheorem{lemma}{Lemma}
\newtheorem{remark}{Remark}
\newtheorem{propo}{Proposition}
\newtheorem{teo}{Theorem}
\newtheorem{coro}{Corollary}
\newtheorem{defi}{Definition}
\newenvironment{proofofmainteo1}[1][Proof of Theorem \ref{laplacefactorWienerHopfnegativo}]{\textbf{#1.} }{\ \rule{0.5em}{0.5em}}
\newenvironment{proofofmainteo2}[1][Proof of Theorem \ref{densidadfactornegativo}]{\textbf{#1.} }{\ \rule{0.5em}{0.5em}}
\newenvironment{proofoflemaJ0}[1][Proof of Lemma \ref{lemaJ0}]{\textbf{#1.} }{\ \rule{0.5em}{0.5em}}
\newenvironment{proofoflemmatranslationtail}[1][Proof of Lemma \ref{lemmatranslationtailminuslaplaceexponent}]{\textbf{#1.} }{\ \rule{0.5em}{0.5em}}
\newenvironment{proofoflemanegativeWHfactor}[1][Proof of Lemma \ref{lemmanegativeWHfactor}]{\textbf{#1.} }{\ \rule{0.5em}{0.5em}}
\newenvironment{proof}[1][Proof]{\textbf{#1.} }{\ \rule{0.5em}{0.5em}}

\def\CC{\mathbb{C}}
\newcommand{\rr}{\mathbb{R}}
\newcommand{\nn}{\mathbb{N}}
\newcommand{\B}{\mathcal{B}}
\newcommand{\p}{\mathbb{P}}
\newcommand{\E}{\mathbb{E}}

\allowdisplaybreaks
\section{Introduction}
\selectlanguage{english}
The Wiener-Hopf factorization for L\'evy processes has become a very important tool
due to its applications in several branches of applied probability, such
as insurance mathematics, theory of branching processes, mathematical finance and optimal control.
For instance, when the market is modelled by a L\'evy process, the positive Wiener-Hopf factor
allows to solve the optimal stopping problem corresponding to the pricing of a
perpetual call option,
while  the negative Wiener-Hopf factor is used to solve the
optimal stopping problem corresponding to the pricing of a
perpetual put option.
This negative Wiener-Hopf factor also arises in insurance mathematics in connection with scale functions appearing in   fluctuation identities. Such identities allow to obtain the joint distribution of the first passage time below a certain level and the position of the process at this time,
which is the classical ruin problem.

For a one-dimensional  L\'evy process $\x=\{\x(t),t\geq0\}$ we denote $S_t =\sup_{0\leq s\leq t}\x(s)$ and   $I_t =\inf_{0\leq s\leq t}\x(s)$.  The explicit distribution of  $S_t$ and $I_t$ in general is  difficult to obtain but the following relation holds. Let $e_q$ be an independent exponential random variable with parameter $q>0$.  The positive and negative Wiener-Hopf factors
of $\x$ are defined respectively as the random variables $S_{e_q} $ and $I_{e_q} $and they   satisfy the identity
\begin{equation}\label{igualdadWHfactors}
 \E\ci e^{ir\ S_{e_q} }\cd\E\ci e^{ir\ I_{e_q} }\cd=\frac{q}{q+\psi_\x(r)},\quad
 r\in\rr,
\end{equation}
where $\psi_\x(r)=-\log\E\ci e^{ir \x(1)}\cd$ is the characteristic exponent of $\x$. Only a few results are known for the explicit distribution of both Wiener-Hopf factors for processes with positive and negative jumps, see e.g. \cite{feller}, \cite{borovkov}, \cite{asmussenetal} and  \cite{kuznetsov}, \cite{kuznetsov2}.
While the distribution of
the positive Wiener-Hopf factor has been studied recently by several authors under some rather general conditions on the positive jumps (see,
e.g.  \cite{kuznetsov},
\cite{kuznetsovpeng}, \cite{lewismordecki} and the references therein),
the   distribution of the negative factor in these cases  has not be obtained explicitly.

In this paper we consider
 L\'evy processes $\x$  with  two-sided jumps  such that the positive jumps have rational Laplace transform,
 and with general negative jumps.  This class of L\'evy processes has been studied recently  in \cite{lewismordecki},
 where the authors obtained the explicit distribution of the positive Wiener-Hopf factor as well as
 asymptotic results for the tail of $S _\infty$. The particular case of L\'evy processes with  positive jumps which have phase-type distribution has been studied by \cite{asmussenetal} where the authors obtained the distributions of both Wiener-Hopf factors. The class of distributions having rational Laplace transforms is rich enough since
 it it dense in the class of  nonnegative distributions.  By inverting the Laplace transform of the random variable $-I_{e_q} $
 we provide  an explicit expression for the probability
density of the negative Wiener-Hopf factor $I_{e_q} $ in terms of given functions.
Under additional regularity assumptions on the L\'evy measure of $\x$  we obtain
asymptotic results as $u\to-\infty$ for the distribution function $F(u)$
of the  negative Wiener-Hopf factor
$I_{e_q} $. Our formula  for  the density of the negative Wiener-Hopf factor    generalizes the corresponding result in  \cite{asmussenetal}.

The paper is organized as follows: in Section \ref{basicnotions}
 we introduce basic concepts and notations and give some  preliminary results.
 In Section \ref{WHfactorssection} we obtain an expression for the Laplace transform
 of $-I_{e_q} $, which we invert in order to get
 an explicit formula for its probability density. In Section 4 we derive asymptotic results for the distribution of the negative Wiener-Hopf factor, while some relevant examples are given in Section \ref{examples}. In the final section we give the proof of the  auxiliary Lemma 5.

\section{Preliminary results}\label{basicnotions}\setcounter{equation}{0}

We consider the class of two-sided jumps  L\'evy processes $\x=\{\x(t),t\geq0 \}$, where
\begin{equation}\label{Xalpha}
\mathcal{X}(t)=ct +\gamma \mathcal{B}(t) + \mathcal{Z}(t)-\mathcal{S}(t),\ t\geq0.
\end{equation}
In the above expression, $c\geq0$ is a drift term, $\mathcal{B}=\{\mathcal{B}(t),t\geq0\}$
is a standard Brownian motion with variance parameter 2,
 $\subo=\{\subo(t),t\geq0\}$ is a pure jump L\'evy process having only positive jumps and $\mathcal{Z}=\{\mathcal{Z}(t),t\geq0\}$
is a compound Poisson process
with L\'evy measure $\lambda_1f_1(x)\,dx$, where $\lambda_1>0$ is constant. The function $f_1$ is assumed to be a probability
density with Laplace transform of the form
\begin{equation}\label{laplacef1}
\widehat{f}_1(r)=\frac{Q(r)}{\prod\limits_{i=1}^N(\alpha_i+r)^{n_i}},
\end{equation}
where $N, n_i\in \nn$ with $n_1+n_2+\dots +n_N=m$, $0\menor \alpha_1\menor \alpha_2\menor \dots \alpha_N$ are real numbers
and $Q(r)$ is a polynomial  of degree at most $m-1$.
Let $\Psi_\mathcal{S}(r)=-\log \E\ci e^{-r \subo(1)}\cd$ be
the Laplace exponent of the process $\subo$. It is known (see \cite{sato})
that $\Psi_\mathcal{S}(r) =\int_{0+}^\infty\pai1-e^{-rx}-rx h(x)\pad\nu_\mathcal{S}(dx)$,
where $h$ is a truncation function and $\nu_\subo$ is the L\'evy measure of $\subo$,
which satisfies $\int_{0+}^\infty \pai x ^2 \wedge 1\pad\nu_\subo(dx)<\infty$.
We also set $\overline{\mathcal{V}}_\subo(u)=\int_u^\infty\nu_\subo(dx)$.

For $\x$ given in (\ref{Xalpha}) we
consider the function
\begin{equation*}
  \Psi_\x(r)=cr+\gamma^2r^2+\lambda_1\pai\frac{Q(-r)}{\prod\limits_{j=1}^N(\alpha_j-r)^{n_j}}-1\pad-\Psi_\mathcal{S}(r).
\end{equation*}
Note that, for  $0\leq r\menor \alpha_1$, $\Psi_\x(r)=\log \E\ci e^{r\x(1)}\cd$.

When $\subo$ is a subordinator we replace $\Psi_\subo(r)$ in the above expression  by
$G_\subo(r)=\int^\infty_{0+}\pai1-e^{-rx}\pad\nu_\subo(dx),$
 and assume that the drift term $c$ includes the constant $\int_{0+}^\infty x h(x)\nu_\subo(dx)$.

In what follows we consider the sets $\CC_+=\{z\in\CC: Re(z)\geq0\}$ and $\CC_{++}=\{z\in\CC:Re(z)\mayor0\}$.
 We  consider the following cases:
\begin{enumerate}[\text{Case }A.]
 \item $c=\gamma=0$ and $\subo$ is a driftless subordinator other than a compound Poisson process or $\subo$ is a compound Poisson process such that $\E\ci\x(1)\cd>0$,
 \item $c\mayor0$, $\gamma=0$ and $\subo$ is a driftless subordinator,
 \item Any other case, except when $c=\gamma=0$ and $\subo$ is a compound Poisson processes with $\E\ci\x(1)\cd\leq 0$. In this case we also assume that $\int_{0+}^\infty(x^2\wedge x)\nu_\subo(dx)<\infty$.
\end{enumerate}

\begin{remark}
Assumption $\int_{0+}^\infty(x^2\wedge x)\nu_\subo(dx)<\infty$ is true, for instance, when $\int_{0+}^\infty x\nu_\subo(dx)<\infty.$
\end{remark}

 The following result from \cite{lewismordecki} holds for  the roots of the equation  $ \Psi_\x(r)-q=0,$ which we call generalized Cram\'er-Lundberg equation:
  \begin{lemma}\label{lemmarootsofGLEgeneralcase}
    Let $q\geq0$ and assume $\E\ci\x(1)\cd>0$ when $q=0$. Then:
    \begin{enumerate}[a)]
      \item In case A, the equation $ \Psi_\x(r)-q=0$ has $m$ roots in $\CC_{++}$,

    \item In cases B and C,  the equation $ \Psi_\x(r)-q=0$ has $m+1$ roots in $\CC_{++}$.
    \end{enumerate}
    In all the cases above, there is exactly one real root $\beta_1(q)$ in the interval $(0,\alpha_1)$, and
    it satisfies  $\lim\limits_{q\downarrow0}\beta_1(q)=0$.
    When $q=0$, $\beta_1(0)=0$ is a simple root of $ \Psi_\x(r)=0$ in all cases A, B and C.
    \end{lemma}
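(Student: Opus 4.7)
My plan is to decouple Lemma~\ref{lemmarootsofGLEgeneralcase} into two independent sub-claims: existence and uniqueness of the real root $\beta_1(q)\in(0,\alpha_1)$ (handled by convexity on the real line), and the count of complex zeros of $\Psi_\x(r)-q$ in $\CC_{++}$ (handled by an argument-principle computation on a large semi-disk).

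For the real root, I use that on $[0,\alpha_1)$ the identity $\Psi_\x(r)=\log\E[e^{r\x(1)}]$ makes $\Psi_\x$ real-valued, strictly convex, with $\Psi_\x(0)=0$, while the pole of $\widehat f_1(-r)$ at $\alpha_1$ forces $\Psi_\x(r)\to+\infty$ as $r\uparrow\alpha_1$ (no other term can destroy this blow-up since the drift, Brownian, and subordinator contributions stay finite on a bounded interval). Convexity together with the intermediate value theorem then produces a unique $\beta_1(q)\in(0,\alpha_1)$ with $\Psi_\x(\beta_1(q))=q$, lying on the increasing branch of $\Psi_\x$. Continuity in $q$ yields $\beta_1(q)\downarrow 0$ as $q\downarrow0$, and the standing hypothesis $\E[\x(1)]>0$ for $q=0$ translates to $\Psi_\x'(0)>0$, so $r=0$ is a simple zero.

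For the global count in $\CC_{++}$, I would multiply by the factor $\prod_{j=1}^N(\alpha_j-r)^{n_j}$, which is non-vanishing on $\CC_{++}$, so that the zeros there coincide with those of
\begin{equation*}
H(r)\;=\;P(r)-\Psi_\subo(r)\prod_{j=1}^N(\alpha_j-r)^{n_j},
\end{equation*}
where $P(r)=(\gamma^2r^2+cr-\lambda_1-q)\prod_j(\alpha_j-r)^{n_j}+\lambda_1 Q(-r)$ is an explicit polynomial of degree $m$, $m+1$, or $m+2$ according to whether $c,\gamma$ vanish. A direct inspection of $P$ shows it has $m$ zeros in $\CC_{++}$ in Case~A and $m+1$ zeros in $\CC_{++}$ in Cases~B and~C; the possible extra zero in Case~C with $\gamma>0$ comes from the negative real root of the quadratic factor $\gamma^2r^2+cr-(\lambda_1+q)$ and lies in $\CC\setminus\CC_{++}$. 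To transfer this count from $P$ to $H$, I would apply Rouch\'e's theorem on the semi-disk of radius $R$ in $\CC_{++}$, using the sublinear growth $|\Psi_\subo(r)|=o(|r|)$ for driftless subordinators (Case~A) or the weaker $o(|r|^2)$ bound (Cases~B and~C), both available from the integrability assumptions on $\nu_\subo$; the strengthened hypothesis $\int(x^2\wedge x)\nu_\subo(dx)<\infty$ in Case~C is exactly what allows the uniform control on the semicircle. On the imaginary axis, zeros are ruled out by $\mathrm{Re}\,\Psi_\x(ir)=\log|\E[e^{ir\x(1)}]|\leq 0<q$ for $q>0$, and by a small indentation around the origin together with the simplicity of $0$ for $q=0$.

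The main obstacle will be Case~A, where the polynomial $P$ and the subordinator term $\Psi_\subo(r)\prod(\alpha_j-r)^{n_j}$ have nearly comparable orders on the semicircle, so a naive Rouch\'e comparison with the leading monomial of $P$ fails and one must either exploit the sharper strict sublinearity of $\Psi_\subo$ on all of $\CC_+$ or replace Rouch\'e by a variation-of-argument computation tracking both terms simultaneously. Once this delicate growth bookkeeping is in place, Cases~B and~C follow by the same scheme with strictly easier estimates, and the statements on $\beta_1(q)$ follow from the convexity analysis on $(0,\alpha_1)$ with no further complications.
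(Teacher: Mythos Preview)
The paper does not prove this lemma; it is quoted as a result of Lewis and Mordecki \cite{lewismordecki}, with no argument supplied. So there is no in-paper proof to compare against. Your convexity treatment of the real root $\beta_1(q)\in(0,\alpha_1)$ is correct and matches the standard approach.

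The complex zero count, however, has a genuine gap. You assert that ``a direct inspection of $P$'' yields its zero count in $\CC_{++}$, but $P(r)$ is exactly $\prod_j(\alpha_j-r)^{n_j}\bigl(\Psi_\x(r)-q\bigr)$ in the special case $\subo\equiv0$: locating its right-half-plane zeros is the lemma itself for a process with no negative jumps, not a triviality. The explicit form $(\gamma^2r^2+cr-\lambda_1-q)\prod_j(\alpha_j-r)^{n_j}+\lambda_1Q(-r)$ does not display its zero locus, and one already needs an argument-principle or Rouch\'e step at this stage. In addition, Rouch\'e on the semi-disk requires the inequality $|H-P|<|P|$ along the \emph{entire} boundary, including the imaginary-axis diameter; your observation that $\mathrm{Re}\,\Psi_\x(ir)\le0<q$ rules out zeros of $H$ there, but does not give domination of $|P(it)|$ over $|\Psi_\subo(it)\prod_j(\alpha_j-it)^{n_j}|$ for bounded $|t|$. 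The Lewis--Mordecki proof avoids both difficulties by applying the argument principle directly to $\Psi_\x-q$ on the indented half-disk contour and computing the winding number in one pass, rather than through a two-stage Rouch\'e comparison with an intermediate polynomial; you gesture at this alternative for Case~A, but it is in fact the route needed in all three cases.
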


Let us assume that the equation $ \Psi_\x(r)-q=0$ has $R$ different roots in $\CC_{++}$,
denoted  by $\beta_1(q),\dots,\beta_R(q)$, with respectively multiplicities $k_1,k_2,\dots,k_R$,
where $\sum\limits_{j=1}^{R}k_j=m+1- 1_{\{case\ A\}}$, and $1_{\{case\ A\}}=1$ in case A and $1_{\{case\ A\}}=0$ in the other cases. We let $\beta_1(q)$ be the real root such
that $\beta_1(q)\in[0,\alpha_1)$, hence $k_1=1$.

The case when $q=0$ is taken in the limiting sense. 

When $\E\ci\x(1)\cd\leq 0$, we have $\p\ci I_\infty=-\infty\cd=1$, hence we have the following condition:

\begin{cond}\label{condiq0}
For $q=0$, we assume that $\E\ci\x(1)\cd>0$.
\end{cond}

For $a=0,1,\dots,m+1$, we define the linear operator $\mathcal{T}_{s,a}$ by the
expression
\begin{equation*}
 \mathcal{T}_{s,a}f(u)=\int_u^\infty(y-u)^ae^{-s(y-u)}f(y)dy,
\end{equation*}
for all measurable, nonnegative functions $f$ and complex numbers $s$ such that the integral above exists and is finite.
If $\nu$ is a measure such that
$\int_u^\infty(y-u)^ae^{-s(y-u)}\nu(dy)$ exists, we define for $a=0,1,\dots,m+1$,
\begin{equation}\label{operadorT}
 \mathcal{T}_{s,a}\nu(u)=\int_u^\infty(y-u)^ae^{-s(y-u)}\nu(dy),
\end{equation}
and denote the Laplace transforms of these two operators by $\widehat{\mathcal{T}}_{s,a} f$
and $\widehat{\mathcal{T}}_{s,a} \nu$.
When $a=0$, we obtain the Dickson-Hipp operator $T_sf$ defined
in \cite{dicksonhipp}
 and write $\mathcal{T}_{s}f(u)=\int_u^\infty e^{-s(y-u)}f(y)dy$, with the corresponding modification
when $f$ is replaced by a measure $\nu$.
We shall use
the following elementary properties and lemma:
\small
\begin{equation}\label{laplaceoperatorT}
 \widehat{T}_sf(r)=\frac{\widehat{f}(r)-\widehat{f}(s)}{s-r},\quad \widehat{T}_s\nu(r)=\frac{\int_{0+}^\infty \pai e^{-rx}-e^{-sx}\pad\nu(dx)}{s-r}.
\end{equation}
\normalsize

\begin{lemma}\label{lemaderivadaseintegrales}
 Let $f$ be a function (or a measure) such that $\mathcal{T}_{s,k} f(u)$ exists for every $s\in\CC_{++}$, $k\in \nn\cup\{0\}$
 and $u\mayor0$. For each $r\in \CC_+$, $s\in\CC_{++}$ and $k\in\nn\cup\{0\}$ there holds $\frac{\partial^k}{\partial s^k}\widehat{T}_sf(r)=(-1)^k\widehat{\mathcal{T}}_{s,k}f(r)$.
\end{lemma}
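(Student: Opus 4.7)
The plan is to prove the formula by induction on $k$, at each step differentiating under both integrals in the double-integral representation
\begin{equation*}
\widehat{\mathcal{T}}_{s,k}f(r)=\int_0^\infty e^{-ru}\int_u^\infty(y-u)^k e^{-s(y-u)}f(y)\,dy\,du.
\end{equation*}
The base case $k=0$ holds by the definition of $\widehat{T}_s f$. For the inductive step I only need to establish
\begin{equation*}
\frac{\partial}{\partial s}\widehat{\mathcal{T}}_{s,k}f(r)=-\widehat{\mathcal{T}}_{s,k+1}f(r),
\end{equation*}
which arises formally from $\partial_s\bigl[(y-u)^k e^{-s(y-u)}\bigr]=-(y-u)^{k+1}e^{-s(y-u)}$.

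To legitimate the exchange of $\partial_s$ with the two integrals I would fix an arbitrary $s_0\in\CC_{++}$, choose $\epsilon>0$ with $\sigma:=\Re(s_0)-\epsilon>0$, and restrict to the disk $\{s\in\CC:|s-s_0|<\epsilon/2\}$. On that disk $\Re(s)\geq\sigma+\epsilon/2$, so setting $C_k:=\sup_{x\geq 0}x^{k+1}e^{-(\epsilon/2)x}<\infty$ gives the $s$-uniform bound
\begin{equation*}
\bigl|(y-u)^{k+1}e^{-s(y-u)}f(y)\bigr|\leq C_k\,e^{-\sigma(y-u)}|f(y)|.
\end{equation*}
Since $\sigma+i\Im(s_0)\in\CC_{++}$, the standing hypothesis yields existence of $\mathcal{T}_{\sigma,0}|f|(u)$ for every $u>0$, so the dominating function is integrable in $y$; and since $\widehat{\mathcal{T}}_{\sigma,k+1}|f|(r)$ likewise exists, it is integrable against $e^{-\Re(r)u}\,du$ in $u$. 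Dominated convergence then justifies moving $\partial_s$ inside both integrals, producing the required recursion and closing the induction. The case of a measure $\nu$ in place of $f$ is identical with $|f(y)|\,dy$ replaced by $|\nu|(dy)$.

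The main obstacle is precisely this uniform domination: the assumption provides only pointwise existence of $\mathcal{T}_{s,k}f(u)$, while dominated convergence needs a majorant valid on a whole complex neighbourhood of $s_0$. The remedy is to exploit that $\CC_{++}$ is open, pulling the real part slightly in toward the imaginary axis to absorb the polynomial factor $(y-u)^{k+1}$ into an exponential of strictly smaller decay rate $\sigma$; after that, the argument reduces to a routine Fubini-and-DCT computation.
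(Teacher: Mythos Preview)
The paper states this lemma without proof, treating it as elementary; your induction on $k$ with differentiation under the integral, justified by dominated convergence after shrinking $\Re(s)$ to absorb the polynomial factor, is the standard argument and is correct.

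Two small remarks. First, a bookkeeping slip: once you have bounded the derivative by $C_k\,e^{-\sigma(y-u)}|f(y)|$, the polynomial factor is already gone, so the outer integrability you need is that of $C_k\,\widehat{\mathcal{T}}_{\sigma,0}|f|(\Re r)$, not $\widehat{\mathcal{T}}_{\sigma,k+1}|f|(r)$; either majorant works, but keep the index consistent with the bound you actually wrote. Second, the hypothesis as stated in the paper guarantees only pointwise finiteness of $\mathcal{T}_{s,k}f(u)$, not of its Laplace transform, so the existence of $\widehat{\mathcal{T}}_{\sigma,k}f(r)$ that you invoke is not literally a consequence of the stated assumption; however, the conclusion of the lemma already presupposes these Laplace transforms exist, so this is a lacuna in the lemma's formulation rather than in your proof.
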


The following result follows from Theorem 6.16 in \cite{kyprianou}.
\begin{lemma}\label{lemmakyprianou}
Let $S_{e_q}$ be the positive Wiener-Hopf factor of a L\'evy process, other than a compound Poisson process, and denote by $\kappa$ the joint Laplace exponent for the bivariate subordinator representing the ascending  ladder process $( {\mathbb{L}}^{-1},\mathbb{H}), $
and by   $\Lambda (dx,dy)$ its  bivariate L\'evy measure.
   Then  there exist $b\geq 0$ such that, for $r\geq0$ and $q\mayor0$, it holds
\begin{equation*}
b  r+\int _{0+}^\infty\pai1-e^{-ry}\pad \int_{0+}^\infty e^{-q x}\Lambda (dx,dy)=\kappa(q,r)-\kappa(q,0),
\end{equation*}
and
\begin{equation}\label{laplacepositiveWHfactor}
 \E\ci e^{-r  S_{e_q}}\cd=
E\ci e^{-r \mathbb{H}_q \pai e_{\kappa(q,0)}\pad}\cd=\frac{\kappa(q,0)}{ \kappa(q,r) }.
\end{equation}
Here $e_{\kappa(q,0)}$ is an exponential random variable
with mean $1/\kappa(q,0),$ independent of the L\'evy process.
It also holds 
\begin{equation}\label{kapa}
q-\Psi_{\x}(r)=\kappa(q, -ir)\widehat {\kappa}(q, ir),
\end{equation}
where $\widehat{\kappa}$ is the joint Laplace exponent for the bivariate subordinator representing the descending  ladder process $( {\mathbb{L}}^{-1},\mathbb{\widehat{H}}). $
\end{lemma}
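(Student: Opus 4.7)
The plan is to derive the three displayed identities from Theorem 6.16 in Kyprianou, which provides the L\'evy-Khintchine form of the joint Laplace exponent $\kappa$ of the bivariate ascending ladder subordinator $({\mathbb{L}}^{-1},\mathbb{H})$ together with the Wiener-Hopf factorization in terms of $\kappa$ and $\widehat{\kappa}$.

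For the first identity, I would exploit that, since $\x$ is not a compound Poisson process, $({\mathbb{L}}^{-1},\mathbb{H})$ is a bivariate (possibly killed) subordinator on $[0,\infty)^2$, so $\kappa$ admits the L\'evy-Khintchine representation $\kappa(q,r)=\eta+dq+br+\int_{(0,\infty)^2}(1-e^{-qx-ry})\Lambda(dx,dy)$ for some killing rate $\eta\geq 0$ and drifts $d,b\geq 0$. Subtracting $\kappa(q,0)$ eliminates $\eta$ and $dq$ and produces $br+\int(e^{-qx}-e^{-qx-ry})\Lambda(dx,dy)$; factoring the integrand as $e^{-qx}(1-e^{-ry})$ and applying Fubini produces the stated expression.

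For the second identity, the key observation is $S_t=\mathbb{H}_{L(t)}$, where $L$ is the local time at the supremum. The chain $\p(L(e_q)>s)=\p(e_q>{\mathbb{L}}^{-1}_s)=\E\ci e^{-q{\mathbb{L}}^{-1}_s}\cd=e^{-s\kappa(q,0)}$ shows that $L(e_q)$ has exponential distribution with rate $\kappa(q,0)$. Theorem 6.16 in Kyprianou then identifies the law of $S_{e_q}$ with that of the $q$-discounted ladder height subordinator (with Laplace exponent $\kappa(q,\cdot)-\kappa(q,0)$) evaluated at this independent exponential time, and a direct computation yields $\E\ci e^{-rS_{e_q}}\cd=\kappa(q,0)/\kappa(q,r)$.

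For the third identity I would substitute the Laplace transform just derived, together with its descending-ladder analogue $\E\ci e^{irI_{e_q}}\cd=\widehat{\kappa}(q,0)/\widehat{\kappa}(q,ir)$, into the Wiener-Hopf identity (\ref{igualdadWHfactors}), then clear denominators and use the standard normalization $\kappa(q,0)\widehat{\kappa}(q,0)=q$ to obtain $q-\Psi_\x(r)=\kappa(q,-ir)\widehat{\kappa}(q,ir)$, extending the resulting relation to the prescribed domain by analytic continuation. The main obstacle here is not analytical but notational: one must carefully reconcile the extended Laplace exponent $\Psi_\x$ used in this paper with Kyprianou's characteristic-exponent convention, and fix the local-time normalization so that the Wiener-Hopf factorization assumes precisely the stated bilinear form. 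Once the conventions are aligned, the three assertions follow directly from Theorem 6.16.
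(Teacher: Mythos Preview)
Your proposal is correct and follows exactly the route the paper indicates: the paper does not give a proof at all but simply states that the lemma ``follows from Theorem 6.16 in \cite{kyprianou}'', and your sketch is precisely an unpacking of that theorem into the three displayed identities. In fact you have supplied more detail than the paper itself, which treats the result as a direct citation.
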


In order to simplify our notations, we define the following constants:
$$E(j,a,q)=\binom{k_j-1}{a}\frac{(-1)^{1-k_j+a}}{(k_j-1)!}\frac{\partial^{k_j-1-a}}{\partial s^{k_j-1-a}}\ci\frac{\prod\limits_{l=1}^N(\alpha_l-s)^{n_l}(\beta_j(q)-s)^{k_j}}{\prod\limits_{l=1}^R(\beta_l(q)-s)^{k_l}}\cd_{s=\beta_j(q)},$$
$$E_*(j,a,q)=\binom{k_j-1}{a}\frac{(-1)^{1-k_j+a}}{(k_j-1)!}\frac{\partial^{k_j-1-a}}{\partial s^{k_j-1-a}}\ci\frac{\prod\limits_{l=1}^N(\alpha_l-s)^{n_l}(\beta_j(q)-s)^{k_j}}{\prod\limits_{l=1}^R(\beta_l(q)-s)^{k_l}}s\cd_{s=\beta_j(q)},$$
for each $j=1,2,\dots,R$. The constants $E(j,0,q)$ and $E(j,a,q)$ for $a>0$ correspond, respectively, to those given in expressions (2.4)
 and (2.5) in \cite{lewismordecki}.

We define the functions
\begin{align}
\ell_q(u)= \sum\limits_{j=1}^R\sum\limits_{a=0}^{k_j-1}E(j,a,q)\mathcal{T}_{\beta_j(q),a}\nu_\subo(u),  
\quad \EL_q(u)=        \sum\limits_{j=1}^R\sum\limits_{a=0}^{k_j-1}E_*(j,a,q)\mathcal{T}_{\beta_j(q),a}\overline{\mathcal{V}}_\subo(u),\quad q\geq0,\nonumber %
\end{align}
and the measure
\begin{equation}\label{funcionesL}
\chi_{q,\subo}(dx )=\left\{
 \begin{array}{ll}
  \nu_\subo(dx)+\ell_q(x)dx&\mbox{in case A },\\
  &\\
    \ell_q(x)dx&\mbox{in case B},\\
  &\\
  \ci\overline{\mathcal{V}}_\subo(x)-\EL_q(x)\cd dx&\mbox{in case C}.
 \end{array}
 \right.
\end{equation}

\section{Main results}\setcounter{equation}{0}\label{WHfactorssection}

In this section we obtain an explicit expression for the probability density of
the negative
Wiener-Hopf factor $I _{e_q}$of
the process $\x$ defined in (\ref{Xalpha}). The results presented for $q=0$ are all under the assumption that Condition \ref{condiq0} holds.

For $a>0$  let $\e_a(x)$ denote the exponential density $\e_a(x)=a e^{-ax}$, $ x>0$ and define, for $q\ge0$, the function $\widehat{W}_q$ by
\begin{equation}\label{laplaceWcasogeneral}
 \widehat{ W}_q(r)=\frac{\kappa(q,-r)}{\ci q- \Psi_\x(r)\cd}, \quad r\ge0,
\end{equation}
where $\kappa$ is given in Lemma \ref{lemmakyprianou}. By Theorem 2.2 in \cite{lewismordecki}, we know that

\begin{equation}\label{kappa1}
\kappa(q,r)=\frac{\prod_{j=1}^R(\beta_j(q)+r)^{k_j}}{\prod_{l=1}^N(\alpha_l+r)^{n_l}}
\end{equation}

 so $\kappa(q,0)=\frac{\prod_{j=1}^R\beta_j^{k_j}(q)}{\prod_{l=1}^N\alpha_l^{n_l}(q)}$. Since $\beta_1(0)=0$, it follows that $\kappa(0,0)=0$. Hence, using that from L'H\^opital's rule  $\lim_{\beta_1(q) \to 0}{ \frac{\Psi_{\x}(\beta_{1}(q))}{\beta_{1}(q))}}=\E(\x(1)),$
 we obtain

$$\lim_{q\downarrow0}\frac{q}{\kappa(q,0)}=\lim_{q\downarrow0}\frac{q}{\beta_1(q)}\frac{\prod_{l=1}^N\alpha_l^{n_l}}{\prod_{j=2}^R\beta_j^{k_j}(q)}=\E\ci\x(1)\cd\frac{\prod_{l=1}^N\alpha_l^{n_l}}{\prod_{j=2}^R\beta_j^{k_j}(0)}.$$

We define for $q>0$, $a(q)=q/\kappa(q,0)$ and $a(0)=\lim_{q\downarrow0}a(q)$. 

 Hence, we have the following result.

\begin{lemma}
The Laplace transforms of $-I _{e_q}$ for $q\mayor0$ and
$-I_\infty $ satisfy   the following equalities for $r\geq0$:
\begin{equation}\label{mainteo1parteA}
 \E\ci e^{-r \ci-I _{e_q}\cd}\cd=a(q) \widehat{ W}_q(r)\quad\text{ and }\quad\E\ci e^{-r \ci-I _\infty\cd}\cd=a(0)\widehat{W}_{0}(r)
\end{equation}

Hence, for $q\geq0$,  $a(q) W_q$ is the density of $-I_{e_q} $ .
\end{lemma}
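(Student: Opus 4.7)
The plan is to derive the Laplace transform formula from the Wiener--Hopf identity (\ref{igualdadWHfactors}) in combination with the Kyprianou identities of Lemma \ref{lemmakyprianou}, and then read off the density by uniqueness of the Laplace transform.

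The computation at the heart of the proof is as follows. Substituting $r \mapsto -is$ in (\ref{igualdadWHfactors}) and using $\psi_\x(-is)=-\Psi_\x(s)$ (which follows from comparing the definitions of the characteristic and Laplace exponents), one obtains, for $s \ge 0$,
$$\E\!\left[e^{sS_{e_q}}\right]\E\!\left[e^{-s(-I_{e_q})}\right]=\frac{q}{q-\Psi_\x(s)}.$$
Here $\E[e^{-s(-I_{e_q})}]$ is a genuine Laplace transform (since $-I_{e_q}\ge 0$), while $\E[e^{sS_{e_q}}]$ must be interpreted by analytic continuation of (\ref{laplacepositiveWHfactor}): the right-hand side $\kappa(q,0)/\kappa(q,s)$ is a rational function by (\ref{kappa1}), and replacing $s$ by $-s$ gives the value $\kappa(q,0)/\kappa(q,-s)$ wherever the denominator is nonzero. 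Substituting and rearranging,
$$\E\!\left[e^{-s(-I_{e_q})}\right]=\frac{q\,\kappa(q,-s)}{\kappa(q,0)\bigl(q-\Psi_\x(s)\bigr)}=a(q)\,\widehat{W}_q(s),$$
which is the first claim in (\ref{mainteo1parteA}).

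To put this on a rigorous footing I would argue that both sides of the above identity are analytic in $s$ on $\CC_{++}$ (the left side because $-I_{e_q}$ is a nonnegative random variable; the right side because the poles of $\kappa(q,-s)$ at $s=\alpha_\ell$ cancel with the corresponding poles that the rational contribution of the positive jumps produces in $\Psi_\x(s)$, and $\widehat{W}_q(0)=1/\kappa(q,0)\cdot q/q=1/a(q)$ is finite). The two sides already agree on the imaginary axis, where (\ref{igualdadWHfactors}) holds directly; the identity theorem therefore extends the equality to $\CC_{++}$ and in particular to the positive real axis. Alternatively, one can use (\ref{kapa}) analytically continued as $q-\Psi_\x(s)=\kappa(q,-s)\widehat{\kappa}(q,s)$, combined with the descending-ladder version of (\ref{laplacepositiveWHfactor}) and the identification $\widehat{\kappa}(q,0)=q/\kappa(q,0)=a(q)$ obtained by evaluating (\ref{kapa}) at $s=0$.

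For the case $q=0$, Condition \ref{condiq0} gives $\E[\x(1)]>0$, hence $I_\infty>-\infty$ almost surely and $-I_{e_q}\to -I_\infty$ in distribution as $q\downarrow 0$, so the Laplace transforms converge pointwise. Since $a(q)\to a(0)$ by definition and $\widehat{W}_q(s)\to\widehat{W}_0(s)$ by continuity of the roots $\beta_j(q)$ at $q=0$ (Lemma \ref{lemmarootsofGLEgeneralcase}) together with the explicit formula (\ref{kappa1}), the second identity of (\ref{mainteo1parteA}) follows. The density assertion is then immediate from uniqueness of the Laplace transform. The main obstacle in the whole argument is the analytic-continuation step: one must carefully justify the passage from the imaginary axis, where the Wiener--Hopf identity lives naturally, to the positive real axis, and control the behaviour near the poles $\alpha_\ell$ that appear in the rational factors of both $\kappa(q,-s)$ and $q-\Psi_\x(s)$.
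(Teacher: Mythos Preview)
Your proposal is correct and follows essentially the same route as the paper: combine the Wiener--Hopf identity (\ref{igualdadWHfactors}) with the explicit form of the positive factor from Lemma~\ref{lemmakyprianou} and (\ref{kappa1}), pass from the imaginary axis to the positive real axis by analytic continuation, and handle $q=0$ by letting $q\downarrow 0$. The paper's argument is terser about the analytic-continuation step, but your more detailed discussion of why the poles at $\alpha_\ell$ cancel and why both sides are analytic on $\CC_{++}$ is a welcome elaboration rather than a different idea.
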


\begin{proof}
Using (\ref{igualdadWHfactors}), Lemma \ref{lemmakyprianou} and the relation $\psi_\x(s)=-\Psi_\x(is)$, we get
\begin{equation}\label{relacionhatkappaykappa}
\E\ci e^{is I _{e_q}}\cd=\frac{q}{q-\Psi_\x(is)}\pai\frac{\kappa(q,-is)}{\kappa(q,0)}\pad.
\end{equation}
The function on the right-hand side can be analytically extended to negative part of the imaginary axis. Hence the result follows taking $s=-ir$ for $r\geq0$.

The case for $q=0$ follows by taking limits when $q\downarrow0$. 
\end{proof}

In the following result we invert $a(q)\widehat{W}_q$.

\begin{teo}\label{laplacefactorWienerHopfnegativo}\leavevmode
\begin{enumerate}[a)]

\item The function $\widehat{W}_q$ satisfies the equalities
\begin{equation}\label{laplaceWienerHopf}
a(q) \widehat{ W}_q(r)=\left\{
\begin{array}{ll}
\frac{\widehat{\e}_{a(q)}(r)}{1-\pai1-\widehat{\e}_{a(q)}(r)\pad\pai1-\widehat{\colachi}_{q,\subo}(r)\pad}, &\text{ in cases A and C with }\gamma=0,\\
\frac{\frac{a(q)}{c}}{1-\frac{1}{c}\widehat{\chi}_{q,\subo}(r)}, &\text{ in case B},\\
\frac{\widehat{\e}_{a(q)\gamma^{-2}}(r)}{1+\gamma^{-2}\pai1-\widehat{\e}_{a(q)\gamma^{-2}}(r)\pad\widehat{\colachi}_{q,\subo}(r)}, &\text{ in case C with } \gamma>0.
\end{array}\right.
\end{equation}
\item For $q\geq0$ and $u\ge0$ the negative Wiener-Hopf factor $I_{e_q}$ has a generalized density function given by:
 \begin{align}
 &a(q) W_q(u)\nonumber \\
 &=\left\{
 \begin{array}{ll}
\e_{a(q)}*\sum\limits_{n=0}^\infty\sum\limits_{k=0}^n\binom{n}{k}(-1)^k\pai\colachi_{q,\subo}+\e_{a(q)}-\e_{a(q)}*\colachi_{q,\subo}\pad^{*k}(u)&\text{ in cases A and C with }\gamma=0,\\
&\\
\frac{a(q)}{c}\delta_0(u)+\frac{a(q)}{c}\sum\limits_{n=1}^\infty\pai\dfrac{1}{c}\pad^n\chi_{q,\subo}^{*n}(u),&\text{ in case B},\\
 &\\
 \e_{a(q)\gamma^{-2}}*\sum\limits_{n=0}^\infty\pai-\frac{1}{\gamma^2}\pad^n\Bigg(\colachi_{q,\subo}-\e_{a(q)\gamma^{-2}}*\colachi_{q,\subo}\Bigg)^{*n}(u)&\text{ in case C with }\gamma>0,\nonumber 
 \end{array}\right.
 \end{align}
where $\delta_0$ is Dirac's delta function.
\end{enumerate}
\end{teo}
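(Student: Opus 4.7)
The plan is to first derive the closed-form Laplace-transform expressions in part (a) from Lemma \ref{lemmakyprianou} together with the explicit form (\ref{kappa1}) of $\kappa$, and then in part (b) to invert each of these by a geometric series expansion, identifying products of transforms with convolutions of functions or measures. The case $q=0$ will be obtained by passing to the limit $q\downarrow0$ exactly as in the preceding lemma.

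For part (a), starting from
\[
a(q)\widehat{W}_q(r)=\frac{q\,\kappa(q,-r)}{\kappa(q,0)\,(q-\Psi_\x(r))},
\]
I would study the auxiliary quotient $(q-\Psi_\x(r))/\kappa(q,-r)$. Lemma \ref{lemmarootsofGLEgeneralcase} tells us that the $\beta_j(q)$ are exactly the zeros in $\CC_{++}$ of $q-\Psi_\x(r)$, with multiplicities $k_j$, so this quotient is \emph{analytic} at every $\beta_j(q)$. The key step is to split it into a polynomial (in $r$) part, determined by the large-$r$ behaviour coming from the drift $c$, the diffusion $\gamma^2$ and the degree mismatch between $\prod_l(\alpha_l-r)^{n_l}$ and $\prod_j(\beta_j(q)-r)^{k_j}$, and a transcendental part collecting the $\lambda_1\widehat f_1(-r)$ and $\Psi_\subo(r)$ contributions. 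The three distinct polynomial parts are precisely what produce the prefactors $c$, $\widehat{\e}_{a(q)}$ and $\gamma^{-2}$ in the three lines of (\ref{laplaceWienerHopf}). The transcendental part is then rewritten via the partial-fraction expansion of $\prod_l(\alpha_l-r)^{n_l}/\prod_j(\beta_j(q)-r)^{k_j}$ at its poles $\beta_j(q)$: by their very definition, the coefficients of $1/(\beta_j(q)-r)^{k_j-a}$ in this expansion are exactly $E(j,a,q)$ in cases A and B and $E_*(j,a,q)$ in case C. Finally, the resulting simple-pole terms combine with $G_\subo(r)$ (cases A, B) or with $\int(1-e^{-rx})\,\overline{\mathcal{V}}_\subo(\cdot)$-type expressions (case C) into the Laplace transforms $\widehat{\mathcal{T}}_{\beta_j(q),a}\nu_\subo(r)$ or $\widehat{\mathcal{T}}_{\beta_j(q),a}\overline{\mathcal{V}}_\subo(r)$, via (\ref{laplaceoperatorT}) and Lemma \ref{lemaderivadaseintegrales}. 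Rearranging and using $a(q)=q/\kappa(q,0)$ and $\widehat{\e}_{a(q)}(r)=a(q)/(a(q)+r)$ produces the three stated forms.

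For part (b), each formula in (\ref{laplaceWienerHopf}) is of the shape $N(r)/(1\pm M(r))$ with $M$ a Laplace transform that is bounded in absolute value by $1$ for $r$ large enough. Expanding $(1\pm M)^{-1}$ as a geometric series and inverting term by term, using that a product of Laplace transforms is the Laplace transform of a convolution, produces the iterated-convolution series. In case B the $n=0$ term of the expansion inverts to the Dirac mass $\delta_0$. In cases A and C with $\gamma=0$, the binomial theorem applied to $[1-(1-\widehat{\e}_{a(q)})(1-\widehat{\colachi}_{q,\subo})]^n$ produces the double sum involving $(\colachi_{q,\subo}+\e_{a(q)}-\e_{a(q)}*\colachi_{q,\subo})^{*k}$, while the outer convolution with $\e_{a(q)}$ is inherited from the factor $\widehat{\e}_{a(q)}$ in the numerator. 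In case C with $\gamma>0$ the alternating factor $(-\gamma^{-2})^n$ comes from the plus sign in the denominator.

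The main obstacle is the algebraic bookkeeping in part (a): matching the partial-fraction coefficients to the explicitly defined constants $E(j,a,q)$ and $E_*(j,a,q)$, and assembling the transcendental pieces into Laplace transforms of the operators $\mathcal{T}_{\beta_j(q),a}$ acting on $\nu_\subo$ rather than on $\overline{\mathcal{V}}_\subo$ (or vice versa). This requires careful manipulation of the higher-order $s$-derivatives appearing in the definitions of $E(j,a,q)$, $E_*(j,a,q)$ through Lemma \ref{lemaderivadaseintegrales}, combined with identities such as $\widehat{T}_s\nu_\subo(r)=[G_\subo(s)-G_\subo(r)]/(s-r)$. Once these identifications are in place, part (b) is a routine geometric-series calculation.
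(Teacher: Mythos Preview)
Your proposal is correct in outline and follows the same two-step strategy as the paper: first obtain a tractable closed form for $(q-\Psi_\x(r))/\kappa(q,-r)$, then invert each of the resulting expressions by a geometric-series expansion and term-by-term identification of products with convolutions. The paper isolates the first step as the separate Lemma~\ref{lemmanegativeWHfactor}, so that the proof of the theorem proper is just the algebraic rearrangement you describe together with $\widehat\e_{a}(r)=a/(a+r)$ and Fubini.

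Where your route differs is in how you propose to carry out that first step. You suggest a direct partial-fraction expansion of $\prod_l(\alpha_l-r)^{n_l}/\prod_j(\beta_j(q)-r)^{k_j}$ and then a recombination of the pole terms with $G_\subo$ or $\Psi_\subo$ into the transforms $\widehat{\mathcal T}_{\beta_j(q),a}$. The paper instead represents $P_1(r)=\prod_l(\alpha_l-r)^{n_l}$ by Lagrange interpolation at \emph{perturbed simple} nodes $\beta_j(q)^*=\beta_j(q)+O(\varepsilon)$, proves the identity for simple roots, and then passes to the limit $\varepsilon\downarrow0$ via a divided-difference lemma to recover the multiple-root case. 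The two approaches are equivalent in spirit; yours is more direct, while the paper's avoids differentiating through the transcendental factor by reducing everything to the simple-root calculation first.

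One point your sketch does not cover: in case~A there are only $m$ roots in $\CC_{++}$ rather than $m+1$, so the degree bookkeeping for the partial-fraction (or interpolation) step does not close as it does in cases~B and~C. The paper handles this by adjoining an artificial extra node $\beta_\infty(n)=\sqrt{n}$ together with a vanishing drift $c_n=1/n$, carrying out the case-B computation, and then letting $n\to\infty$; the residual $G_\subo(r)$ term in (\ref{lemanegativeWHfactor}) for case~A is exactly what survives this limit. A direct partial-fraction argument would need an analogous device, and you should flag it. Also, the ``combining'' step you allude to --- turning $G_\subo(r)/(\beta_j-r)$-type terms into $\widehat T_{\beta_j}\nu_\subo(r)=(G_\subo(\beta_j)-G_\subo(r))/(\beta_j-r)$ --- depends on the root condition $\Psi_\x(\beta_j(q))=q$ to supply the missing $G_\subo(\beta_j(q))$; the paper makes this explicit by rewriting $\lambda_1+q$ through $\Psi_\x(\beta_j^*)$ before interpolating.
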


In order to  prove Theorem \ref{laplacefactorWienerHopfnegativo} we need the following lemma. Its proof is technical and lengthly and   is deferred to section 6.
\begin{lemma}\label{lemmanegativeWHfactor}
For $q\geq0$ we have:
\begin{equation}\label{lemanegativeWHfactor}
\ci q- \Psi_\x(r)\cd\pai \kappa(q,-r)\pad^{-1}=\left\{
\begin{array}{ll}
 a(q)+G_\mathcal{S}(r)+\widehat{\ell}_q(0)-\widehat{\ell}_q(r), &\text{ in Case A},\\
 a(q)+\widehat{\ell}_q(0)-\widehat{\ell}_q(r), &\text{ in Case B},\\
 a(q) +\gamma^2 r-\frac{\Psi_\subo(r)}{r}-\big[\widehat{\EL}_q(0)-\widehat{\EL}_q(r)\big], &\text{ in Case C}.
 \end{array}\right.
\end{equation}
\end{lemma}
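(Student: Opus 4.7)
My plan is to interpret the claimed identity as an explicit formula for the descending-ladder Laplace exponent $\widehat{\kappa}(q,r)$. By (\ref{kapa}) and analytic continuation to real $r\geq 0$, together with the explicit form (\ref{kappa1}) of $\kappa$, one has
\[
\widehat{\kappa}(q,r)=\frac{q-\Psi_\x(r)}{\kappa(q,-r)},
\]
which is the left-hand side of the lemma, and evaluating the Wiener--Hopf identity at $r=0$ gives $\widehat{\kappa}(q,0)=q/\kappa(q,0)=a(q)$. The task is therefore to reduce the above quotient to the stated right-hand side in each of the three cases.

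I would write $P(r)=\prod_{l=1}^{N}(\alpha_l-r)^{n_l}$ and $B(r)=\prod_{j=1}^{R}(\beta_j(q)-r)^{k_j}$, so that $\kappa(q,-r)=B(r)/P(r)$, and
\[
q-\Psi_\x(r)=q-cr-\gamma^{2}r^{2}+\lambda_1\Bigl[1-\frac{Q(-r)}{P(r)}\Bigr]+\Phi(r),
\]
where $\Phi=G_\subo$ in Cases A and B and $\Phi=\Psi_\subo$ in Case C. Multiplying by $P(r)/B(r)$ separates a purely rational contribution from a ``jump'' part $\Phi(r)P(r)/B(r)$. The key algebraic step is the partial-fraction expansion
\[
\frac{P(r)}{B(r)}=c_\infty(r)+\sum_{j=1}^{R}\sum_{a=0}^{k_j-1}\frac{a!\,E(j,a,q)}{(\beta_j(q)-r)^{a+1}},
\]
where $c_\infty(r)=1$ in Case A (equal degrees) and is affine in $r$ in Cases B and C (with coefficients determined by $c$ and $\gamma^{2}$, since $\sum_j k_j=m+1$ there). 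A direct residue computation at $r=\beta_j(q)$ identifies the coefficients with the constants $E(j,a,q)$; the constants $E_*(j,a,q)$ enter analogously through $rP(r)/B(r)$ in Case C.

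The final step is to recognise the poles $(\beta_j(q)-r)^{-(a+1)}$ as Laplace-transform kernels of the operators $\mathcal{T}_{\beta_j(q),a}$. By Lemma \ref{lemaderivadaseintegrales} and the identities (\ref{laplaceoperatorT}), one has $\widehat{\mathcal{T}}_{s,a}\nu_\subo(r)=(-1)^{a}(\partial^{a}/\partial s^{a})\widehat{T}_{s}\nu_\subo(r)$ with $\widehat{T}_{s}\nu_\subo(r)=[G_\subo(s)-G_\subo(r)]/(s-r)$. Matching the partial-fraction sum against this identity, and against the analogous identity with $\overline{\mathcal{V}}_\subo$ in place of $\nu_\subo$ in Case C, collapses the residue sum into $\widehat{\ell}_q(0)-\widehat{\ell}_q(r)$ (respectively $\widehat{\EL}_q(0)-\widehat{\EL}_q(r)$): the $s$-evaluation piece accumulates into the constant part $\widehat{\ell}_q(0)$, and the $r$-evaluation piece into $\widehat{\ell}_q(r)$. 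The constant $a(q)$ emerges as the value of the right-hand side at $r=0$, while the polynomial remainder $c_\infty(r)$, combined with what is left of $\Phi(r)P(r)/B(r)$ after subtracting its partial-fraction part, contributes the $G_\subo(r)$ summand in Case A, no further term in Case B, and the $\gamma^{2}r-\Psi_\subo(r)/r$ summand in Case C.

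The main obstacle is case-by-case bookkeeping of degrees, signs, and leading terms. The three cases differ in $(c,\gamma)$, in whether $\subo$ contributes via $G_\subo$ or via $\Psi_\subo$, and in the degree balance $\sum_j k_j=m$ versus $m+1$, so the polynomial $c_\infty(r)$ and the corresponding ``polynomial'' summand on the right-hand side change from case to case. Case C is the subtlest: substituting $\Psi_\subo$ for $G_\subo$ forces an integration by parts to re-express $\Psi_\subo(r)/r$ through the tail $\overline{\mathcal{V}}_\subo$, one must handle the $\gamma^{2}r$ drift term separately, and one must verify that multiplying the partial-fraction expansion by $r$ reproduces exactly the constants $E_*(j,a,q)$ of the paper. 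A useful consistency check throughout is that the resulting $\widehat{\kappa}(q,\cdot)$ be of admissible Laplace-exponent form (nonnegative drift plus a subordinator-type integral), which pins down the signs in the final expression.
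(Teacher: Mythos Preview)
Your approach via a direct partial-fraction expansion of $P(r)/B(r)$ is genuinely different from the paper's proof. The paper instead rewrites $P_1(r)$ by Lagrange interpolation at perturbed \emph{simple} nodes $\beta_j(q)^*$, uses $\Psi_\x(\beta_j(q)^*)-q\approx 0$ to telescope $\Psi_\x(r)-q$ into difference quotients, invokes Lemma~\ref{lemaJ0} (the identity $J_0\equiv 0$) and the interpolation formula (\ref{formulainterpolacion}) to kill the rational-transform contribution, and only at the end lets $\varepsilon\downarrow 0$ and passes to multiple roots through the divided-difference Lemma~\ref{diferenciasdivididaslabbe}. Your route would avoid the perturbation device entirely and handle multiple roots natively, which is a genuine simplification.

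However, the proposal as written has two concrete problems. First, in Cases B and C one has $\deg P=m<\deg B=m+1$, so the polynomial part $c_\infty$ of $P/B$ is $0$, not affine in $r$; the affine piece $\gamma^2 r$ in Case C comes instead from the separate rational summand $\big[(q+\lambda_1-cr-\gamma^2 r^2)P(r)-\lambda_1 Q(-r)\big]/B(r)$, whose numerator has degree $m+2$. Second, and more seriously, the step ``collapses the residue sum into $\widehat{\ell}_q(0)-\widehat{\ell}_q(r)$'' is not correct as stated: expanding $(-1)^a\partial_s^a\widehat T_s\nu_\subo(r)$ by Leibniz shows that
\[
\Phi(r)\sum_{j,a}\frac{a!\,E(j,a,q)}{(\beta_j(q)-r)^{a+1}}=-\widehat{\ell}_q(r)+\big[\text{rational function with genuine poles at the }\beta_j(q)\big],
\]
the extra rational piece carrying the Taylor data $G_\subo^{(b)}(\beta_j(q))$. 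These poles must cancel exactly against the poles of the purely rational part, and this cancellation is precisely where the hypothesis that the $\beta_j(q)$ are roots (with multiplicity) of $q-\Psi_\x$ enters; in the paper's argument the analogous cancellation is effected by Lemma~\ref{lemaJ0} together with the vanishing of $\frac{\Psi_\x(\beta_l(q))-q}{\beta_l(q)-r}$ after the Leibniz rule. In your framework the clean way to close this gap is to observe that both $\widehat\kappa(q,r)=[q-\Psi_\x(r)]/\kappa(q,-r)$ and $\widehat{\ell}_q(r)$ (resp.\ $\widehat{\EL}_q(r)$) are analytic on $\CC_{++}$, so the residual rational function is pole-free and hence a polynomial, whose degree and leading coefficients are then read off from the behaviour as $r\to\infty$. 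Without this analyticity argument, or an explicit residue computation replacing it, the proposal does not close.
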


\begin{proofofmainteo1}
Clearly, part b) follows inverting (\ref{laplaceWienerHopf}). To prove  (\ref{laplaceWienerHopf}) we assume $q>0$. The case $q=0$ follows by letting $q\downarrow0$.

From (\ref{mainteo1parteA}),
 (\ref{funcionesL}),(\ref{laplaceWcasogeneral}), (\ref{lemanegativeWHfactor}) and the definition of $a(q)$ we obtain
 \begin{equation}\label{caso A}  
  \E\ci e^{-r \ci-I _{e_q}\cd}\cd=a(q) \widehat{ W}_q(r)
  =\frac{a(q)}{a(q)+\int\limits_{0+}^\infty(1-e^{-rx})\chi_{q,\subo}(dx )} =\frac{q}{q+\int\limits_{0+}^\infty(1-e^{-rx})\kappa(q,0)\chi_{q,\subo}(dx )}.
\end{equation}
 
To obtain (\ref{laplaceWienerHopf}) in case A, we use
\begin{equation}\label{laplaceE}
\widehat{\e}_{a(q)}(r)=\frac{a(q)}{a(q)+r}.
\end{equation} and apply Fubini's Theorem to $\int\limits_{0+}^\infty(1-e^{-rx})\chi_{q,\subo}(dx )$. This yields,
\begin{align}
  a(q) \widehat{ W}_q(r)&=\frac{a(q)}{a(q)+\int\limits_{0+}^\infty(1-e^{-rx})\chi_{q,\subo}(dx )}
  =\frac{a(q)}{a(q)+r\widehat{\colachi}_{q,\subo}(r)}
    =\frac{\widehat{\e}_{a(q)}(r)}{1-\pai1-\widehat{\e}_{a(q)}(r)\pad\pai1-\widehat{\colachi}_{q,\subo}(r)\pad},\nonumber 
  \end{align}
Hence (\ref{laplaceWienerHopf}) follows in this case. 
In case B, (\ref{laplaceWcasogeneral}) and (\ref{lemanegativeWHfactor}) we have
\begin{align}
  a(q)\widehat{W}_q(r)&=\frac{a(q)}{a(q)+\widehat{\ell}_q(0)-\widehat{\ell}_q(r)}  =\frac{\frac{a(q)}{a(q)+\widehat{\ell}_q(0)}}{1-\frac{1}{a(q)+\widehat{\ell}_q(0)}\widehat{\ell}_q(r)}.\label{igualdad2casoB}
\end{align}
Due to (\ref{igualdadadelta})  we have $a(q)+\widehat{\ell}_q(0)=c$, and
from  (\ref{funcionesL}) it follows that $\widehat{\ell}_q(r)=\widehat{\chi}_{q,\subo}(r)$.
Substituting these two equalities into (\ref{igualdad2casoB}) and using (\ref{mainteo1parteA}) gives (\ref{laplaceWienerHopf}).

We now deal with case C. Using (\ref{laplaceWcasogeneral}) and (\ref{lemanegativeWHfactor}), we obtain $$ a(q)\widehat{W}_q(r)=\frac{a(q)}{a(q)+\gamma^2 r-\frac{\Psi_\subo(r)}{r}-\Big[\widehat{\EL}_q(0)-\widehat{\EL}_q(r)\Big]}.
$$

Now we apply Fubini's theorem to $\Psi_\subo(r)/r$ to obtain, for $\gamma>0$:
\begin{align}
 a(q)\widehat{W}_q(r)
 &=\frac{\frac{a(q)\gamma^{-2}}{a(q)\gamma^{-2}+r}}{1+\gamma^{-2}\frac{r}{a(q)\gamma^{-2}+r}\widehat{\colachi}_{q,\subo}(r)}
 =\frac{\widehat{\e}_{a(q)\gamma^{-2}(r)}}{1+\gamma^{-2}\pai1-\widehat{\e}_{a(q)\gamma^{-2}(r)}\pad\widehat{\colachi}_{q,\subo}(r)},\nonumber
\end{align}
where we have used (\ref{laplaceE}) with $a(q)$ replaced by $a(q)\gamma^{-2}$. When $\gamma=0$ it holds,
\begin{align}
 a(q)\widehat{W}_q(r)
  &=\frac{\frac{a(q)}{a(q)+r}}{1+\frac{r}{a(q)+r}\widehat{\colachi}_{q,\subo}(r)}
 =\frac{\widehat{\e}_{a(q)(r)}}{1+\pai1-\widehat{\e}_{a(q)(r)}\pad\widehat{\colachi}_{q,\subo}(r)},\nonumber
\end{align}
and we obtain (\ref{laplaceWienerHopf}) using (\ref{mainteo1parteA}).
\end{proofofmainteo1}

\begin{lemma} For all $q\ge0$ the measure $\kappa (q,0) \chi_{q,\subo}$ is the L\'evy measure  of  $-I_{e_q}.$  
  
 \end{lemma}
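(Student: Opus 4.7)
The plan is to read off the L\'evy measure directly from the explicit Laplace transform of $-I_{e_q}$ derived in the proof of Theorem \ref{laplacefactorWienerHopfnegativo}, together with the Wiener-Hopf characterization of $-I_{e_q}$ as the descending ladder height subordinator of $\x$ evaluated at an independent exponential time.

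First, I would recall that equation (\ref{caso A}) in that proof, which is verified in each of the three cases A, B and C using the case-dependent definition of $\chi_{q,\subo}$, yields
\begin{equation*}
\E\left[e^{-r(-I_{e_q})}\right]=\frac{q}{q+\int_{0+}^{\infty}\left(1-e^{-rx}\right)\kappa(q,0)\chi_{q,\subo}(dx)},\quad r\geq 0.
\end{equation*}

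Second, I would observe that the right-hand side is in the canonical form $\theta/(\theta+\varphi(r))$ with $\theta=q$ and $\varphi(r)=\int_{0+}^{\infty}(1-e^{-rx})\kappa(q,0)\chi_{q,\subo}(dx)$, which is precisely the Laplace transform of the position at an independent exponential time of rate $q$ of a driftless subordinator whose L\'evy measure is $\kappa(q,0)\chi_{q,\subo}$. Combining this with Lemma \ref{lemmakyprianou} applied to the dual process $-\x$ --- which identifies $-I_{e_q}$ in distribution with the descending ladder height subordinator of $\x$ evaluated at an independent exponential time --- and with the uniqueness of the L\'evy--Khintchine representation of a Bernstein function, one identifies $\kappa(q,0)\chi_{q,\subo}$ as the L\'evy measure of $-I_{e_q}$ in the normalization of local time corresponding to killing rate $q$. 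The identity $q=\kappa(q,0)\widehat{\kappa}(q,0)$, obtained by setting $r=0$ in (\ref{kapa}), is what makes this choice of normalization consistent. The case $q=0$ is then obtained by letting $q\downarrow 0$ under Condition \ref{condiq0}.

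The main technical obstacle is to verify that $\int_{0+}^{\infty}(1\wedge x)\kappa(q,0)\chi_{q,\subo}(dx)<\infty$ in each of the three cases, so that $\varphi$ is a genuine Bernstein function and the right-hand side indeed exhibits a bona fide L\'evy--Khintchine decomposition. This should follow from the explicit form of $\ell_q$ and $\EL_q$ in terms of the operators $\mathcal{T}_{\beta_j(q),a}$ (yielding exponential decay at infinity and controlled behaviour near $0$), together with the standing integrability hypotheses on $\nu_\subo$ listed in the setup of cases A, B, and C.
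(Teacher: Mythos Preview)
Your route differs from the paper's. Both start from (\ref{caso A}), but the paper then invokes the explicit formula $\nu_q(dx)=\int_0^\infty t^{-1}e^{-qt}\,\p[-I_t\in dx]\,dt$ for the L\'evy measure of the infinitely divisible random variable $-I_{e_q}$ (Kyprianou, Lemma~6.17), applies Frullani's integral to rewrite $q/(q+\Psi_q(r))$ in the exponential form $\exp\bigl\{-\int_{0+}^\infty(1-e^{-rx})\pi(dx)\bigr\}$ with $\pi(dx)=\int_0^\infty t^{-1}e^{-qt}\,\p[\mathcal{N}_q(t)\in dx]\,dt$ (here $\mathcal{N}_q$ denotes the subordinator with L\'evy measure $\kappa(q,0)\chi_{q,\subo}$), and concludes $\pi=\nu_q$ by uniqueness of the L\'evy--Khintchine representation. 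You bypass Frullani entirely and instead match $q/(q+\varphi(r))$ against $\widehat{\kappa}(q,0)/\widehat{\kappa}(q,r)$ via the ladder-height description, reading off the L\'evy measure from uniqueness of the Bernstein representation of $\widehat{\kappa}(q,\cdot)-\widehat{\kappa}(q,0)$. Your argument is shorter and more direct, and it is exactly the content of the Remark that follows the lemma in the paper. One caveat you should make explicit: in the paper's proof, ``L\'evy measure of $-I_{e_q}$'' is read as $\nu_q$, the L\'evy measure in the L\'evy--Khintchine representation of the infinitely divisible \emph{random variable}, whereas what your argument pins down is the L\'evy measure of the descending ladder-height \emph{subordinator} (in the normalization with killing rate $q$). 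These two measures are related through the Frullani identity but are not the same object, so state clearly which one you are identifying. The integrability check you flag as the main obstacle is not carried out in the paper either; there $\mathcal{N}_q$ is simply posited to be a subordinator.
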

  \begin{proof} 
The non-negative random variable $-I_{e_q} $ is  infinitely divisible,
with   Laplace transform 
 \begin{equation}\label{laplacefactornegativo2} 
 \E\ci e^{-r\pai-I_{e_q} \pad}\cd=\exp\left\{ -\int_{0+}^\infty \pai1-e^{-rx}\pad\nu_q(dx)\right\},
 \end{equation} where the measure
 \begin {equation} \label{medidanu}
 \nu_q(dx) =\int_{0}^\infty t^{-1}e^{-q t}\p\ci -I_t \in dx\cd dt
 \end{equation}
 is the L\'evy measure of  $-I_{e_q}$ (see e.g. Lema 6.17 in \cite{kyprianou}).  
 On the other hand, from the formula for Frullani's integral, we have for $\alpha,\beta>0$ and $z\leq 0$,
\begin{equation}\label{frullanisintegral}
 \pai\frac{\alpha}{\alpha-z}\pad^\beta=\exp\left\{-\int_0^\infty(1-e^{zt})\beta t^{-1}e^{-\alpha t}dt\right\}.
\end{equation}
 
Let $\mathcal{N}_q$ denote the subordinator with L\'evy measure $\kappa(q,0)\chi_{q,\subo},$ and denote its Laplace exponent by  $\Psi_q$.  From (\ref{caso A}) we have
$\E\ci e^{-r(-I_{e_q} )}\cd=
 \frac{q}{q+\Psi_q(r)},$  hence using (\ref{frullanisintegral}) with $\alpha=q$, $\beta=1$ and $z=-\Psi_q(r)$
we obtain
\begin{equation}\label{frullaniA}
 \E\ci e^{-r\pai-I_{e_q} \pad}\cd=
\exp\left\{ -\int_0^\infty\pai1-e^{-t\Psi_q(r)}\pad t^{-1}e^{-q t}dt\right\}.
\end{equation}
Since $1-e^{-t\Psi_q(r)}=\int_0^\infty \pai1-e^{-rx}\pad\p\ci \mathcal{N}_q(t)\in dx\cd$, setting
\begin{equation}\label{medida}
\pi(dx)=\int_{0}^\infty t^{-1}e^{-q t}\p\ci \mathcal{N}_q(t) \in dx\cd dt,
\end{equation} 
and using
Fubini's theorem in (\ref{frullaniA}) it follows that
\begin{equation}\label{frullani2}
 \E\ci e^{-r\pai-I_{e_q} \pad}\cd=
\exp\left\{ -\int_0^\infty\pai1-e^{-rx}\pad \pi(dx)\right\}. 
\end{equation} 
Now from (\ref{laplacefactornegativo2})  we deduce that 
\begin{equation}\label{nu}
\pi= \nu_q.
\end{equation}
Using (\ref{medida}) and (\ref{medidanu}) we obtain the result.
\end{proof}

\begin{remark} Since $-I_{e_q}=\widehat{H}(e_{\widehat{\kappa}(q,0)})$ in distribution  \cite[Theorem 6.16]{kyprianou},  it follows that  the measure $\kappa (q,0) \chi_{q,\subo}$ is also the L\'evy measure of the descending ladder-height process $\widehat{H}$ corresponding to $\x(t),$ killed at the uniform rate $\widehat{\kappa}(q,0).$
\end{remark}

\section{Asymptotic behavior of  the negative Wiener-Hopf factor}\setcounter{equation}{0}

 For $u>0$ we denote
$F_{I_{e_q} }(-u)=\p\ci I_{e_q} <-u\cd$

We obtain asymptotic expressions for $F_{I_{e_q} }(-u)$ when $u\to\infty$. For this, we use the following technical result.

\begin{lemma}
The equality
\begin{align}
r\widehat{F}_{I_{e_q}}(r)
&=\frac{\sum\limits_{j=1}^{m+2}A_j'r^j+\Psi_\subo(r)\prod\limits_{l=1}^N\alpha_l^{n_l}+\sum\limits_{j=1}^mA_j r^j\Psi_\subo(r)-\widehat{\kappa}(q,0)\sum\limits_{j=1}^{m+1}B_jr^j}{q\prod\limits_{l=1}^N\alpha_l^{n_l}+\sum\limits_{j=1}^{m+2}A_j'r^j+\Psi_\subo(r)\prod\limits_{l=1}^N\alpha_l^{n_l}+\sum\limits_{j=1}^mA_j r^j\Psi_\subo(r)}\label{expansionPsiX}
\end{align}
holds
for some constants $A_j,j=1,\dots,m$, $B_k,k=1,\dots,m+1$ and $A_l',l=1,\dots,m+2$.
\end{lemma}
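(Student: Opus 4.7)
The plan is a direct algebraic reduction of the claim to the explicit Laplace transform of $-I_{e_q}$. I start from the standard tail--Laplace identity
\begin{equation*}
r\widehat F_{I_{e_q}}(r)=1-\E\ci e^{-r(-I_{e_q})}\cd,
\end{equation*}
valid for any nonnegative random variable (here with $X=-I_{e_q}$, using $F_{I_{e_q}}(-u)=\p\ci -I_{e_q}>u\cd$). Inserting $\E\ci e^{-r(-I_{e_q})}\cd=q\kappa(q,-r)\bigl[\kappa(q,0)(q-\Psi_\x(r))\bigr]^{-1}$ from the lemma preceding Theorem \ref{laplacefactorWienerHopfnegativo} and combining into a single fraction yields
\begin{equation*}
r\widehat F_{I_{e_q}}(r)=\frac{\kappa(q,0)(q-\Psi_\x(r))-q\,\kappa(q,-r)}{\kappa(q,0)(q-\Psi_\x(r))}.
\end{equation*}

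Next I multiply numerator and denominator by $\prod_{l=1}^N(\alpha_l-r)^{n_l}/\kappa(q,0)$, so that both the rational piece $\widehat f_1(-r)=Q(-r)/\prod(\alpha_j-r)^{n_j}$ hidden inside $\Psi_\x$ and the factor $\kappa(q,-r)$ from (\ref{kappa1}) become polynomials. Setting $P(r):=\prod_{l=1}^N(\alpha_l-r)^{n_l}=\prod\alpha_l^{n_l}+\sum_{j=1}^m A_j r^j$, the denominator expands to
\begin{equation*}
P(r)(q+\lambda_1-cr-\gamma^2r^2)-\lambda_1 Q(-r)+P(r)\Psi_\subo(r).
\end{equation*}
Its constant term is $(q+\lambda_1)\prod\alpha_l^{n_l}-\lambda_1 Q(0)$, which collapses to $q\prod\alpha_l^{n_l}$ using the normalization $\widehat f_1(0)=Q(0)/\prod\alpha_l^{n_l}=1$. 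The remaining polynomial piece has degree at most $m+2$ and defines the coefficients $A_j'$, while $P(r)\Psi_\subo(r)$ splits as $\prod\alpha_l^{n_l}\Psi_\subo(r)+\sum_{j=1}^m A_j r^j\Psi_\subo(r)$, reproducing exactly the denominator of (\ref{expansionPsiX}).

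For the numerator, the subtracted term becomes $(q/\kappa(q,0))\prod_{j=1}^R(\beta_j(q)-r)^{k_j}$. By (\ref{kappa1}) at $r=0$ one has $\prod\beta_j(q)^{k_j}=\kappa(q,0)\prod\alpha_l^{n_l}$, and $q/\kappa(q,0)=\widehat\kappa(q,0)$ follows from the Wiener--Hopf identity $\kappa(q,0)\widehat\kappa(q,0)=q$ (equivalently, $a(q)=\widehat\kappa(q,0)$). Writing $\prod(\beta_j(q)-r)^{k_j}-\prod\beta_j(q)^{k_j}=\sum_{j=1}^{m+1}B_j r^j$, this contribution reads $q\prod\alpha_l^{n_l}+\widehat\kappa(q,0)\sum_{j=1}^{m+1}B_j r^j$. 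Subtracting from the expanded denominator, the two copies of $q\prod\alpha_l^{n_l}$ cancel and precisely the numerator of (\ref{expansionPsiX}) emerges.

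The only delicate point is the cancellation of the $r^0$ term in the numerator, which is forced by \emph{both} identities $Q(0)=\prod\alpha_l^{n_l}$ and $\kappa(q,0)\widehat\kappa(q,0)=q$; without either one the left-hand side $r\widehat F_{I_{e_q}}(r)$ would not vanish at $r=0$. Otherwise the proof is pure polynomial bookkeeping, and in Case A (or when $c=0$ or $\gamma=0$) the polynomial degrees drop and some of the $A_j'$ or $B_j$ vanish, consistent with the generic form stated in (\ref{expansionPsiX}).
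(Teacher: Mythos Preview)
Your proof is correct and follows essentially the same route as the paper. The only cosmetic difference is that the paper writes $\E\ci e^{-r(-I_{e_q})}\cd=\widehat\kappa(q,0)/\widehat\kappa(q,r)$ directly via the descending ladder factor (equation (\ref{laplacepositiveWHfactor}) applied to $-\x$) and then substitutes $\widehat\kappa(q,r)=\prod_l(\alpha_l-r)^{n_l}(q-\Psi_\x(r))/\prod_j(\beta_j(q)-r)^{k_j}$ from (\ref{kapa})--(\ref{kappa1}); you instead start from $a(q)\widehat W_q(r)=q\kappa(q,-r)/[\kappa(q,0)(q-\Psi_\x(r))]$ and recover $\widehat\kappa(q,0)=q/\kappa(q,0)$ at the end. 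After multiplying through by $\prod_l(\alpha_l-r)^{n_l}$ the two computations are literally identical, including the use of $Q(0)=\prod_l\alpha_l^{n_l}$ and $q\prod_l\alpha_l^{n_l}=\widehat\kappa(q,0)\prod_j\beta_j(q)^{k_j}$ to kill the constant terms.
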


\begin{proof}
Using that $r\widehat{F}_{I_{e_q}}(r)=1-\widehat{f}_{-I_{e_q}}(r)$, where $\widehat{f}_{-I_{e_q}}(r)$ denotes the Laplace transform of the density of $-I_{e_q}$, we obtain
as in (\ref{laplacepositiveWHfactor}) \begin{equation}
r\widehat{F}_{I_{e_q}}(r)=\frac{\widehat{\kappa}(q,r)-\widehat{\kappa}(q,0)}{\widehat{\kappa}(q,r)}.\label{kappas}
\end{equation}
On the other hand  from (\ref{kapa}) and (\ref{kappa1}) it follows \begin{equation}
\widehat{\kappa}(q,r)=\prod_{l=1}^N\pai\alpha_l-r\pad^{n_l}\frac{q-\Psi_\x(r)}{\prod_{j=1}^R\pai\beta_j(q)-r\pad^{k_j}}\label{kappagorro}.
\end{equation}
Since 
$$q-\Psi_\x(r)=q-cr-\gamma^2r^2-\lambda_1\pai\frac{Q(-r)}{\prod\limits_{j=1}^N(\alpha_j-r)^{n_j}}-1\pad+\Psi_\mathcal{S}(r),$$
and $\lambda_1\pai\frac{Q(-r)}{\prod\limits_{j=1}^N(\alpha_j-r)^{n_j}}-1\pad$ can be written as the quotient of some polynomial $\mathcal{P}_1$ with degree $m$ and $\prod\limits_{j=1}^N(\alpha_j-r)^{n_j}$, which is also a polynomial of degree $m$, we have
\begin{equation}\label{laplaceexponentwithpolynomials}
q-\Psi_\x(r)=q-cr-\gamma^2r^2-\frac{\mathcal{P}_1(r)}{\prod\limits_{j=1}^N(\alpha_j-r)^{n_j}}+\Psi_\mathcal{S}(r).
\end{equation}
Since $\frac{Q(0)}{\prod\limits_{j=1}^N\alpha_j^{n_j}}=1$, we obtain that $\mathcal{P}_1$ is a polynomial with constant term  $0$. Using  $\prod\limits_{j=1}^N(\alpha_j-r)^{n_j}=\sum_{j=0}^mA_j r^j$, it follows that
\begin{align}
\prod\limits_{j=1}^N(\alpha_j-r)^{n_j}(q-\Psi_\x(r))&=qA_0+q\sum_{j=1}^mA_j r^j-c\sum_{j=0}^mA_j r^{j+1}-\gamma^2\sum_{j=0}^mA_j r^{j+2}\nonumber\\
&-\mathcal{P}_1(r)+A_0\Psi_\subo(r)+\sum_{j=1}^mA_j r^j\Psi_\subo(r)\nonumber\\
&=qA_0+\sum\limits_{j=1}^{m+2}A_j'r^j+A_0\Psi_\subo(r)+\sum_{j=1}^mA_j r^j\Psi_\subo(r)\label{expansionpolinomialdePsiX}.
\end{align}
Evaluating $\prod\limits_{j=1}^N(\alpha_j-r)^{n_j}$ at $r=0$ we obtain $A_0=\prod_{l=1}^N\alpha_l^{n_l}$. Moreover, setting $r=0$ in (\ref{kappagorro}) gives
\begin{equation}\label{qA0}
q\prod\limits_{l=1}^N\alpha_l^{n_l}=\widehat{\kappa}(q,0)\prod\limits_{j=1}^R\beta_j^{k_j}(q).
\end{equation}
Hence, using that $\prod_{j=1}^R\pai\beta_j(q)-r\pad^{k_j}$ can be expressed as $\sum_{j=0}^{m+1}B_jr^j$ and substituting (\ref{qA0}) and (\ref{expansionpolinomialdePsiX}) into (\ref{kappas}), it follows that
\begin{align}
r\widehat{F}_{I_{e_q}}(r)
&=\frac{\sum\limits_{j=1}^{m+2}A_j'r^j+\Psi_\subo(r)\prod\limits_{l=1}^N\alpha_l^{n_l}+\sum\limits_{j=1}^mA_j r^j\Psi_\subo(r)-\widehat{\kappa}(q,0)\sum\limits_{j=1}^{m+1}B_jr^j}{q\prod\limits_{l=1}^N\alpha_l^{n_l}+\sum\limits_{j=1}^{m+2}A_j'r^j+\Psi_\subo(r)\prod\limits_{l=1}^N\alpha_l^{n_l}+\sum\limits_{j=1}^mA_j r^j\Psi_\subo(r)}.\nonumber
\end{align}
\end{proof}

In what follows we write $f\approx c g$
for any two nonnegative functions $f$ and $g$ on $[0,\infty)$  such that $\lim\limits_{u\to\infty}\frac{f(u)}{g(u)}=c$, with $c\neq 0$. 
Now we can derive the first asymptotic expression for $F_{I_{e_q} }$.

\begin{propo}\label{proporeferi}
If  $r^{-\xi}\Psi_\subo(r)\to D$ as $r\downarrow 0$  for some $\xi\in(0,1)$ and  some positive constant $D$, then 
$$F_{I_{e_q} }(-u)\approx \frac{D}{q\Gamma(1-\xi)}u^{-\xi},\quad u\to\infty.$$
\end{propo}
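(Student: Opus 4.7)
The plan is to extract the $r\downarrow 0$ asymptotic behaviour of $\widehat{F}_{I_{e_q}}$ from the explicit expression (\ref{expansionPsiX}), and then transfer it into $u\to\infty$ asymptotics of $F_{I_{e_q}}(-u)$ via Karamata's Tauberian theorem followed by the monotone density theorem.

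First I would analyse the numerator and denominator of (\ref{expansionPsiX}) separately as $r\downarrow 0$. Setting $A_0:=\prod_{l=1}^N\alpha_l^{n_l}$, each polynomial-in-$r$ contribution to the numerator---namely $\sum_{j=1}^{m+2}A_j'r^j$ and $\widehat{\kappa}(q,0)\sum_{j=1}^{m+1}B_jr^j$---is $O(r)$, while the cross term $\sum_{j=1}^m A_j r^j\Psi_\subo(r)$ is $O(r^{1+\xi})$ under the hypothesis $\Psi_\subo(r)\sim Dr^\xi$. Since $\xi\in(0,1)$, all of these are negligible compared with $A_0\Psi_\subo(r)\sim A_0Dr^\xi$, so the numerator is asymptotically $A_0Dr^\xi$; meanwhile the denominator converges to $qA_0$. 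Consequently
\begin{equation*}
r\widehat{F}_{I_{e_q}}(r)\sim\frac{A_0Dr^\xi}{qA_0}=\frac{D}{q}r^\xi,\qquad r\downarrow 0,
\end{equation*}
which rearranges to $\widehat{F}_{I_{e_q}}(r)\sim (D/q)\,r^{-(1-\xi)}$.

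Second, I would note that $\widehat{F}_{I_{e_q}}(r)=\int_0^\infty e^{-ru}F_{I_{e_q}}(-u)\,du$ is the ordinary Laplace transform of the non-increasing tail $u\mapsto \p\ci -I_{e_q}>u\cd=F_{I_{e_q}}(-u)$, and set $U(u):=\int_0^u F_{I_{e_q}}(-v)\,dv$. Then $U$ is non-decreasing with $U(0)=0$ and Laplace--Stieltjes transform equal to $\widehat{F}_{I_{e_q}}$. Since $1-\xi>0$, Karamata's Tauberian theorem gives
\begin{equation*}
U(u)\sim\frac{D/q}{\Gamma(2-\xi)}\,u^{1-\xi},\qquad u\to\infty.
\end{equation*}
As $U$ admits the monotone density $u\mapsto F_{I_{e_q}}(-u)$, the monotone density theorem then yields
\begin{equation*}
F_{I_{e_q}}(-u)\sim\frac{(1-\xi)(D/q)}{\Gamma(2-\xi)}\,u^{-\xi}=\frac{D}{q\,\Gamma(1-\xi)}\,u^{-\xi},\qquad u\to\infty,
\end{equation*}
using $\Gamma(2-\xi)=(1-\xi)\Gamma(1-\xi)$.

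The only delicate step is the first one: one must exploit $\xi<1$ carefully to see that the single term $A_0\Psi_\subo(r)$ dominates both the $O(r)$ polynomial contributions and the $O(r^{1+\xi})$ cross term in the numerator of (\ref{expansionPsiX}). Once this leading order has been isolated, the passage to the $u\to\infty$ behaviour of $F_{I_{e_q}}(-u)$ is a textbook Karamata--Tauberian argument, closed out by the Gamma identity $\Gamma(2-\xi)=(1-\xi)\Gamma(1-\xi)$.
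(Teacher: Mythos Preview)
Your proposal is correct and follows essentially the same route as the paper: both analyse the expansion (\ref{expansionPsiX}) to show $r^{1-\xi}\widehat{F}_{I_{e_q}}(r)\to D/q$ as $r\downarrow 0$, using that $\xi<1$ makes the $r^{j-\xi}$ terms vanish while $r^{-\xi}\Psi_\subo(r)\to D$, and then invoke a Tauberian argument. The only cosmetic difference is that the paper cites a single packaged result (Theorem~4 on p.~446 of \cite{feller}, which already contains the monotone-density step), whereas you spell this out as Karamata's theorem for $U(u)=\int_0^u F_{I_{e_q}}(-v)\,dv$ followed by the monotone density theorem applied to the non-increasing integrand $F_{I_{e_q}}(-u)$.
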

 
\begin{proof}
Due to Theorem 4 in \cite{feller}, page 446, we only need to prove that 

\begin{equation}\label{qepd}
r^{-\xi+1}\widehat{F}_{I_{e_q} }(r)\to Dq^{-1}.
\end{equation}
From (\ref{expansionPsiX}) we have
\begin{align}
r^{-\xi+1}\widehat{F}_{I_{e_q} }(r)=\frac{\sum\limits_{j=1}^{m+2}A_j'r^{j-\xi}+r^{-\xi}\Psi_\subo(r)\prod\limits_{l=1}^N\xi_l^{n_l}+\sum\limits_{j=1}^mA_j r^{j-\xi}\Psi_\subo(r)-\widehat{\kappa}(q,0)\sum\limits_{j=1}^{m+1}B_jr^{j-\xi}}{q\prod\limits_{l=1}^N\xi_l^{n_l}+\sum\limits_{j=1}^{m+2}A_j'r^j+\Psi_\subo(r)\prod\limits_{l=1}^N\xi_l^{n_l}+\sum\limits_{j=1}^mA_j r^j\Psi_\subo(r)}.
\end{align}
Since all the polynomial terms in the numerator have no constant term and $\xi\in(0,1)$, we have ${j-\xi}>0$, hence letting $r\downarrow0$ and using the hypothesis on $r^{-\xi}\Psi_\subo(r)$, we obtain (\ref{qepd}). 
\end{proof}  
 
Let us denote by $\colachi_{q,\subo}$ the tail of the L\'evy measure $\chi_{q,\subo}$.

\begin{propo}
In case B, if $1-\frac{\colachi_{q,\subo}(u)}{\widehat{\chi}_{q,\subo}(0)}$ is a subexponential distribution,
then $F_{I_{e_q} }(-u)\approx q^{-1}\kappa(q,0)\colachi_{q,\subo}(u)$.
\end{propo}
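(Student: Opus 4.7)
The plan is to recognize $-I_{e_q}$ as a compound geometric random variable in case B, and then invoke the classical tail asymptotic for compound sums with subexponential summands.

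First, I would rewrite the Laplace transform of $-I_{e_q}$ obtained in part a) of Theorem \ref{laplacefactorWienerHopfnegativo}. In case B we have
\[
\E\!\left[e^{-r(-I_{e_q})}\right] = \frac{a(q)/c}{1 - \frac{1}{c}\widehat{\chi}_{q,\subo}(r)}.
\]
From the identity $a(q)+\widehat{\ell}_q(0)=c$ used in the proof of Theorem \ref{laplacefactorWienerHopfnegativo} in case B, together with $\widehat{\ell}_q(r)=\widehat{\chi}_{q,\subo}(r)$, we get $a(q)/c = 1-p$ where $p := \widehat{\chi}_{q,\subo}(0)/c \in (0,1)$. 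Setting $\widetilde{\chi}(dx):=\chi_{q,\subo}(dx)/\widehat{\chi}_{q,\subo}(0)$, a probability measure on $(0,\infty)$, the Laplace transform becomes
\[
\E\!\left[e^{-r(-I_{e_q})}\right] = \frac{1-p}{1-p\,\widehat{\widetilde{\chi}}(r)} = \sum_{n=0}^\infty (1-p)\,p^{n}\,\widehat{\widetilde{\chi}}(r)^{n},
\]
which identifies $-I_{e_q}\stackrel{d}{=}\sum_{i=1}^{N}X_i$, where $N$ is geometric with $\p(N=n)=(1-p)p^{n}$ and $(X_i)$ are i.i.d.\ with distribution $\widetilde\chi$, independent of $N$.

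Next, I would invoke the standard compound-geometric tail asymptotic (Embrechts--Goldie--Veraverbeke; see for instance Asmussen's \emph{Applied Probability and Queues}). The hypothesis that $F_{\widetilde\chi}(u)=1-\overline{\chi}_{q,\subo}(u)/\widehat{\chi}_{q,\subo}(0)$ is subexponential and the fact that $N$ has bounded exponential moments of all orders $\le 1/p$ imply
\[
\p\!\left(-I_{e_q}>u\right) \;\sim\; \E[N]\,\bigl(1-F_{\widetilde\chi}(u)\bigr) \;=\; \frac{p}{1-p}\cdot\frac{\overline{\chi}_{q,\subo}(u)}{\widehat{\chi}_{q,\subo}(0)}, \qquad u\to\infty.
\]
Substituting $p/(1-p) = \widehat{\chi}_{q,\subo}(0)/a(q)$, the normalizing factors cancel, leaving
\[
F_{I_{e_q}}(-u)=\p(-I_{e_q}>u) \;\sim\; \frac{\overline{\chi}_{q,\subo}(u)}{a(q)} \;=\; \frac{\kappa(q,0)}{q}\,\overline{\chi}_{q,\subo}(u),
\]
using $a(q)=q/\kappa(q,0)$, which is the desired asymptotic equivalence.

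The main potential obstacle is verifying cleanly that $p<1$ (equivalently $\widehat{\chi}_{q,\subo}(0)<c$) so that the compound-geometric representation and the EGV-type tail theorem apply; for $q>0$ this follows from $a(q)>0$, while the case $q=0$ is covered by Condition \ref{condiq0}. Otherwise the argument is essentially a direct reduction to a known subexponential asymptotic, with only bookkeeping of the constants $p$, $a(q)$, and $\kappa(q,0)$ needed to match the stated form.
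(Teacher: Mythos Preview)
Your proposal is correct and follows essentially the same approach as the paper: both recognize the compound-geometric structure of $-I_{e_q}$ in case B via $p=\widehat{\chi}_{q,\subo}(0)/c$ and then apply the Embrechts--Goldie--Veraverbeke subexponential tail asymptotic, with the same bookkeeping of constants $p/(1-p)=\widehat{\chi}_{q,\subo}(0)/a(q)$ and $a(q)=q/\kappa(q,0)$. The only cosmetic difference is that the paper writes out the convolution series for the tail $F_{I_{e_q}}(-u)$ explicitly and splits the final step into two citations (Corollary~3 of \cite{embrechtsetal} for the compound-geometric tail and Lemma~2.5.2 of \cite{rolskietal} for the final convolution), whereas you invoke the random-sum result in one stroke.
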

\begin{proof}
First we note that from (\ref{igualdadadelta}) we get $c=a(q)+\widehat{\ell}_q(0)$, hence $a(q)=c-\widehat{\chi}_{q,\subo}(0)$, and
\begin{align}
  \widehat{F}_{I_{e_q} }(r)
  &=\frac{1}{r}\pai\frac{\frac{1}{c}\widehat{\chi}_{q,\subo}(0)-\frac{1}{c}\widehat{\chi}_{q,\subo}(r)}{1-\frac{1}{c}\widehat{\chi}_{q,\subo}(r)}\pad
  =\frac{\frac{1}{c}\widehat{\colachi}_{q,\subo}(r)}{1-\frac{1}{c}\widehat{\chi}_{q,\subo}(r)},\nonumber
\end{align}
where in the last equality we used that for a probability density $f$ with tail $\overline{F}$, we have $\widehat{\overline{F}}(r)=r^{-1}\pai1-\widehat{f}(r)\pad$.
It follows that
\begin{equation}\label{colacasoB}
F_{I_{e_q} }(-u)=\frac{1}{c}\colachi_{q,\subo}*\sum\limits_{n=0}^\infty\pai\frac{1}{c}\pad^n\colachi_{q,\subo}^{*n}(u).
\end{equation}

Now we set $p=\frac{\widehat{\chi}_{q,\subo}(0)}{a(q)+\widehat{\chi}_{q,\subo}(0)}$. Then $p\in(0,1)$ and from (\ref{igualdadadelta}) it follows that
 $p=\frac{\widehat{\chi}_{q,\subo}(0)}{c}$. Using (\ref{colacasoB}) we get
 \begin{align}
  F_{I_{e_q} }(-u)&= \dfrac{1}{c}\colachi_{q,\subo}*\sum\limits_{n=0}^\infty\pai\dfrac{1}{c}\pad^n\chi_{q,\subo}^{*n}(u)
  = p\dfrac{\colachi_{q,\subo}}{\widehat{\chi}_{q,\subo}(0)}*\sum\limits_{n=0}^\infty p^n\pai\dfrac{\chi_{q,\subo}}{\widehat{\chi}_{q,\subo}(0)}\pad^{*n}(u).\nonumber
 \end{align}
 Let us define the probability distribution
 $H(u)=(1-p)\int\limits_0^u\sum\limits_{n=0}^\infty p^n\pai\dfrac{\chi_{q,\subo}}{\widehat{\chi}_{q,\subo}(0)}\pad^{*n}(x)dx$,
 and denote its density by $h$.
Since $\colachi_{q,\subo}(u)/\widehat{\chi}_{q,\subo}(0)$ is the tail of a proper distribution, due to Corollary 3 in \cite{embrechtsetal}
and the assumption that $1-\colachi_{q,\subo}/\widehat{\chi}_{q,\subo}(0)$ is subexponential, we obtain
\begin{equation}\label{asymptoticG}
 \overline{H}(u)\approx \frac{p}{1-p}\frac{\colachi_{q,\subo}(u)}{\widehat{\chi}_{q,\subo}(0)},
\end{equation}
hence $H$ is a subexponential distribution. From Lemma 2.5.2 in \cite{rolskietal} and (\ref{asymptoticG}), we have
\begin{align}
 \lim\limits_{u\to\infty}\frac{F_{I_{e_q} }(-u)}{\colachi_{q,\subo}(u)/\widehat{\chi}_{q,\subo}(0)}&=\lim\limits_{u\to\infty}\frac{\frac{p}{1-p}\frac{\colachi_{q,\subo}}{\widehat{\chi}_{q,\subo}(0)}*h(u)}{\colachi_{q,\subo}(u)/\widehat{\chi}_{q,\subo}(0)}
 =\frac{p}{1-p}=\widehat{\chi}_{q,\subo}(0)q^{-1}\kappa(q,0),\nonumber
\end{align}
which implies the result.\hfill
\end{proof}

Let us define $\Pi(x)=\frac{\int_1^x \chi_{q,\subo}(dy)}{\int_1^\infty  \chi_{q,\subo}(dy)}$ for $x>1$. We recall that a probability distribution $F$ is a
subexponential distribution if $\overline{F^{*2}}(x)\approx 2\overline{F}(x)$
as $x\to\infty$.
\begin{propo}
 Suppose that $\Pi$ is a subexponential distribution and  set $\overline{\nu_q}(u)=\int\limits_u^\infty\nu_q(dy)$ where $\nu_q$ is the L\'evy measure of $-I_{e_q}.$
 Then the random variable $-I_{e_q} $ has a subexponential distribution, and
 \begin{equation}\label{colaasintoticamedidapi}
  F_{I_{e_q} }(-u)\approx \overline{\nu_q}(u),\ u\to\infty.
 \end{equation}
\end{propo}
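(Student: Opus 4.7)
The plan is to reduce the assertion to the classical equivalence theorem of Embrechts, Goldie and Veraverbeke for non-negative infinitely divisible distributions on the positive half-line.

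By the preceding lemma, $-I_{e_q}$ is infinitely divisible with L\'evy measure $\nu_q=\kappa(q,0)\chi_{q,\subo}$; in particular $\overline{\nu_q}(u)=\kappa(q,0)\,\colachi_{q,\subo}(u)$ for $u>0$. Since the positive constant $\kappa(q,0)$ cancels in the ratio defining $\Pi$, the hypothesis that $\Pi$ is subexponential translates directly into the statement that the restricted normalized tail
\begin{equation*}
x\longmapsto \frac{\int_1^x \nu_q(dy)}{\int_1^\infty \nu_q(dy)},\quad x>1,
\end{equation*}
of the L\'evy measure of $-I_{e_q}$ is a subexponential distribution.

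At this stage I would invoke the classical theorem of Embrechts, Goldie and Veraverbeke (see, e.g., their original 1979 article, or Theorem 2.5.4 in Rolski et al., or Theorem 2 in Pakes 2004), which asserts that for a non-negative infinitely divisible distribution $F$ with L\'evy measure $\nu$, the three statements (i) $F$ is subexponential; (ii) the restricted normalized tail of $\nu$ on $(1,\infty)$ is subexponential; (iii) $\overline{F}(u)\sim\overline{\nu}(u)$ as $u\to\infty$, are equivalent. Our hypothesis is exactly (ii), so both (i) and (iii) follow, yielding the subexponentiality of $-I_{e_q}$ together with the asymptotic equivalence (\ref{colaasintoticamedidapi}).

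The only potential obstacle is verifying that the present setup fits the framework of the cited equivalence theorem. This is immediate: the theorem applies to any L\'evy measure on $(0,\infty)$ with no additional integrability assumption beyond $\int (x\wedge 1)\,d\nu<\infty$, which is automatic for the L\'evy measure of a non-negative infinitely divisible random variable. In particular, no delicate computation or probabilistic construction is needed; the content of the result is precisely the identification of the L\'evy measure of $-I_{e_q}$ carried out in the previous lemma, followed by a direct application of the Embrechts--Goldie--Veraverbeke dichotomy.
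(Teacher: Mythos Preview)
Your proof is correct and follows essentially the same route as the paper: both identify $\nu_q=\kappa(q,0)\chi_{q,\subo}$ as the L\'evy measure of the non-negative infinitely divisible random variable $-I_{e_q}$ (this is the content of equation~(\ref{frullani2}) and Lemma~4 in the paper), and then apply Theorem~1 of Embrechts, Goldie and Veraverbeke~\cite{embrechtsetal} to pass from the subexponentiality of the normalized tail $\Pi$ of the L\'evy measure to the subexponentiality of $-I_{e_q}$ and the asymptotic equivalence $F_{I_{e_q}}(-u)\approx\overline{\nu_q}(u)$. Your explicit remark that the constant $\kappa(q,0)$ cancels in the definition of $\Pi$ is a useful clarification that the paper leaves implicit.
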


\begin{proof}
 The assertion that $-I_{e_q} $ has a subexponential distribution and that (\ref{colaasintoticamedidapi}) holds, follow from
(\ref{frullani2}) and  Theorem 1 in \cite{embrechtsetal}.
\end{proof}

\section{Examples}\label{examples}\setcounter{equation}{0}
In this section we apply the results from the previous section to several particular examples in which we obtain 
simple asymptotic expressions for the negative Wiener-Hopf factor $F_{I_{e_q} }$.
For simplicity, in cases A and C we will assume that the 
roots of $\Psi_\x(r)-q=0$ in $\CC_{++}$
are all different. This assumption holds e.g. when the density $f_1$ is a convex combination of exponential densities.

\textbf{Example 1 (case A)}.
 We take $\Psi_\subo(r)=r^\xi$, for $\xi \in(0,1)$, hence $\subo$ is an $\xi$-stable subordinator and  the assumptions on Proposition \ref{proporeferi} hold. Hence,
$$F_{I_{e_q} }(-u)\approx \frac{1}{\Gamma(1-\xi)}u^{-\xi} \mbox{ as }u\to\infty.$$

\textbf{Example 2 (Case B)}.
1. Let us
suppose that $G_\subo(r)=\lambda_2\widehat{f}_2(r)-\lambda_2$, i.e. $\subo$ is a compound Poisson
process with L\'evy measure $\lambda_2f_2(x)$ with $\lambda_2\mayor0$. In this case the
resulting L\'evy risk process is the classical two-sided jumps risk process.
Therefore
$$\chi_{q,\subo}(u)=\lambda_2\sum\limits_{j=1}^R\sum\limits_{a=0}^{k_j-1}E(j,a,q)
\int_u^\infty(y-u)^a e^{-\beta_j(q)(y-u)}f_2(y)dy.$$
When $k_j=1$ for all $j=1,2,\dots,R$, and
$f_2$ is a mixture of exponential densities, the above expression can be easily calculated.
Let us consider the particular case when $f_1(x)= qe^{-qx}$, $x>0$, $\lambda_2=1$ and $f_2(x)=pe^{-px}$, $x>0$.
In this case the generalized
Lundberg equation $\Psi_\x(r)-q=0$ has two real roots $\beta_1(q)$ and $\beta_2(q)$ such that
$0<\beta_1(q)<q<\beta_2(q)$. Hence
$$\chi_{q,\subo}(u)=\pai\frac{q-\beta_1(q)}{\beta_2(q)-\beta_1(q)}\frac{1}{\beta_1(q)+p}+\frac{\beta_2(q)-q}{\beta_2(q)-\beta_1(q)}\frac{1}{\beta_2(q)+p}\pad pe^{-pu}:=C_q pe^{-pu},$$
which means that the associated subordinator $\mathcal{N}_{2,q}$ is a compound Poisson process with intensity $C_q$ and jump sizes
with density $f_2$.
 In this case we obtain an explicit expression for $F_{I_{e_q}}$ and its Laplace transform:
\begin{equation}\label{CP}
\widehat{F}_{I_{e_q} }(r)=\frac{\frac{C_q p}{c}}{\frac{p(c-C_q)}{c}+r}\quad\mbox{and}\quad
 F_{I_{e_q} }(-x)=\frac{C_q p}{c}e^{-p(c-C_q) x}.
 \end{equation}

2. Now let us suppose that $\overline{F}_2(x)=\pai \frac{\theta}{\theta+x^c}\pad^{\xi}$
 for $\xi, c,\theta>0$. This corresponds to a classical two-sided
jumps risk process with claims given by a Burr distribution with parameters $\xi$, $\theta$ and $c$. Then
$$\chi_{q,\subo}(u)=\lambda_2\sum\limits_{j=1}^{m+1}E(j,0,q)\int_u^\infty e^{-\beta_j(q)(y-u)} \frac{\xi\theta^\xi}{(\theta+y)^{1+\xi}}dy,$$
and applying L'H\^opital's rule twice we get
\begin{align}
 \lim\limits_{u\to\infty}\frac{\colachi_{q,\subo}(u)}{\pai\frac{\theta}{\theta+u^c}
 \pad^{\xi}}&=\lambda_2\sum\limits_{j=1}^{m+1}E(j,0,q)\lim\limits_{u\to\infty}
 \frac{\int_u^\infty e^{\beta_j(q)x}\int_x^\infty
  e^{-\beta_j(q)y} \frac{\xi\theta^\xi}{(\theta+y^c)^{1+\xi}}dy}{\pai\frac{\theta}
  {\theta+u^c}\pad^{\xi}}
=\lambda_2\sum\limits_{j=1}^{m+1}E(j,0,q)\frac{1}{\beta_j(q)}.\nonumber
  \end{align}
Using the second equality in Lemma 5.2 in \cite{kolkovskamartin2}, it follows that
\begin{equation*}
 \lim\limits_{u\to\infty}\frac{\colachi_{q,\subo}(u)}
 {\pai\frac{\theta}{\theta+u^c}\pad^{\xi}}=\lambda_2\constC=
 \frac{\lambda_2a(q)}{q},
\end{equation*}
which implies that  $1-\frac{\colachi_{q,\subo}(u)}{\widehat{\chi}_{q,\subo}(0)}$ is a subexponential distribution. Therefore,
due to Proposition 2 we obtain
$$F_{I_{e_q} }(-u)\approx \frac{\lambda_2}{q}\pai\frac{\theta}{\theta+u^c}\pad^{\xi}\quad\mbox{as}\quad u\to\infty.$$
\textbf{Example 3 (case C)}. Let us suppose that $\subo$ is a spectrally positive $\xi$-stable process,
with $\E\ci\subo(1)\cd=0$ and $\xi \in (1,2)$. In this case
$-\Psi_\subo(r)=r^\xi$ and
$$\chi_{q,\subo}(x)=\frac{1}{\xi} \pai x^{-\xi}-\sum_{j=1}^{m+1}\beta_j(q)E(j,0,q)\int_x^\infty e^{-\beta_j(q)(y-x)}y^{-\xi}dy\pad,$$
where we have used that $E_*(j,0,q)=\beta_j(q)E(j,0,q)$.

Notice that the function $\mathcal{L}_{\xi}(x)=\sum_{j=1}^{m+1}\beta_j(q)E(j,0,q)\int_x^\infty e^{-\beta_j(q)(y-x)}y^{-\xi}dy$
coincides with the function $F_\xi$ defined in \cite{kolkovskamartin3}, hence from Proposition 1 in the aforementioned work we obtain
$$\int_x^\infty\mathcal{L}_{\xi}(y)dy\approx \pai1-\constC\pad\frac{x^{1-\xi}}{\Gamma(2-\xi)}\quad\mbox{as}\quad x\to\infty.$$
Since $\constC=\frac{a(q)}{q}$, it follows that
$\colachi_{q,\subo}(x)\approx C_{\xi} \frac{x^{1-\xi}}{\Gamma(2-\xi)}$ as $x\to\infty,$
where $C_{\xi}=\frac{1}{\xi}\pai\frac{\Gamma(2-\xi)}{\xi-1}-1+\frac{a(q)}{q}\pad$.
Therefore, from Proposition 2 we obtain
$F_{I_{e_q} }(-u)\approx\frac{C_{\xi}}{a(q)\Gamma(2-\xi)}u^{1-{\xi}}$
as $u\to\infty$.

\section{Proof of Lemma \ref{lemmanegativeWHfactor}}\label{proofs}\setcounter{equation}{0}
This section is devoted to the proof of Lemma \ref{lemmanegativeWHfactor}. It requires some preliminary results which we state first.

Let $x_1, x_2,\dots,x_k$ be different complex numbers. For $m\in \{1,2,\ldots\}$ let us denote
$(x_i)_{m}= \underbrace{x_i,x_i,\dots,x_i}_{m \ \text{times}},$ $i=1,\ldots,k$.
The following result follows from standard tools in interpolation theory.
\begin{lemma}\label{diferenciasdivididaslabbe}
 Let $f:\CC\to\CC$ be an analytic function and define $g:\CC^m\to\CC$ by
\begin{equation*}
g[x_1,\dots,x_m]=(-1)^{m-1}\sum\limits_{j=1}^m\frac{f(x_j)}{\prod\limits_{l\neq j}(x_l-x_j)},
\end{equation*} where $x_i \neq x_j$ for $i \neq j.$
Let $n_1$, $n_2,\ldots,n_k$ be given natural numbers, and $m=\sum_{j=1}^kn_j$. 

Then the function $g$ can be extended analytically for multiple points by the expression
 \begin{align}
 g\ci (x_1)_{n_1},\dots,(x_k)_{n_k}\cd&=(-1)^{m-1}\sum\limits_{j=1}^k\frac{(-1)^{m-n_j}}{(n_j-1)!}\frac{\partial ^{n_j-1}}{\partial s^{n_j-1}}\ci\frac{f(s)(x_j-s)^{n_j}}{\prod\limits_{l=1}^k(x_l-s)^{n_l}}\cd_{s=x_j}.\nonumber
 \end{align}
\end{lemma}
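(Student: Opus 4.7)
The plan is to recognize $g[x_1,\dots,x_m]$ as the standard $m$-th divided difference of $f$ and then use the contour-integral representation of the divided difference to extend it analytically to coincident arguments. First I would observe that since $\prod_{l\neq j}(x_l-x_j)=(-1)^{m-1}\prod_{l\neq j}(x_j-x_l)$, the $(-1)^{m-1}$ in the definition of $g$ exactly cancels the sign coming from reversing the factors, so that $g[x_1,\dots,x_m]=\sum_{j=1}^m f(x_j)/\prod_{l\neq j}(x_j-x_l)$, which is the classical divided difference.

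Next I would use partial fractions to write $1/\prod_{l=1}^m(s-x_l)=\sum_{j}[(s-x_j)\prod_{l\neq j}(x_j-x_l)]^{-1}$, so that for any simple closed contour $\Gamma$ enclosing $x_1,\dots,x_m$ and contained in the domain of analyticity of $f$,
\begin{equation*}
g[x_1,\dots,x_m]=\frac{1}{2\pi i}\oint_\Gamma \frac{f(s)}{\prod_{l=1}^m(s-x_l)}\,ds,
\end{equation*}
via the residue theorem for simple poles. The key observation is that the right-hand side is jointly analytic in $(x_1,\dots,x_m)$ as long as all these points stay inside $\Gamma$, in particular under confluences. So I would \emph{define} the extension of $g$ to multiple points by
\begin{equation*}
g[(x_1)_{n_1},\dots,(x_k)_{n_k}]=\frac{1}{2\pi i}\oint_\Gamma \frac{f(s)}{\prod_{l=1}^k(s-x_l)^{n_l}}\,ds,
\end{equation*}
which agrees with the formula for distinct points and is analytic in the $x_l$'s; this justifies calling it an analytic extension.

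Finally, I would evaluate this integral by the residue theorem for higher-order poles. The residue at $s=x_j$ of order $n_j$ is $\frac{1}{(n_j-1)!}\partial_s^{n_j-1}[f(s)(s-x_j)^{n_j}/\prod_l(s-x_l)^{n_l}]_{s=x_j}$, which yields
\begin{equation*}
g[(x_1)_{n_1},\dots,(x_k)_{n_k}]=\sum_{j=1}^k\frac{1}{(n_j-1)!}\frac{\partial^{n_j-1}}{\partial s^{n_j-1}}\left[\frac{f(s)(s-x_j)^{n_j}}{\prod_{l=1}^k(s-x_l)^{n_l}}\right]_{s=x_j}.
\end{equation*}
To match the stated form with $(x_j-s)^{n_j}/\prod_l(x_l-s)^{n_l}$, I would use the identity $(x_j-s)^{n_j}/\prod_l(x_l-s)^{n_l}=(-1)^{n_j-m}(s-x_j)^{n_j}/\prod_l(s-x_l)^{n_l}$; combined with the outer $(-1)^{m-1}$ and the per-term factor $(-1)^{m-n_j}$, the signs satisfy $(-1)^{m-1}(-1)^{m-n_j}(-1)^{n_j-m}=(-1)^{m-1}$, which reproduces the overall $(-1)^{m-1}$ present in the original simple-point definition of $g$ and thus yields the stated expression. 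The only nuisance in the argument is bookkeeping of these three compounding signs; analyticity of the contour integral makes the extension step essentially free.
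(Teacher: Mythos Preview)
The paper offers no proof of this lemma, merely remarking that it ``follows from standard tools in interpolation theory''. Your contour-integral argument is exactly such a tool and is the canonical route to the confluent (Hermite) form of the divided difference: represent the simple-node divided difference as $\frac{1}{2\pi i}\oint_\Gamma f(s)\prod_l(s-x_l)^{-1}\,ds$, observe this is analytic in the nodes, and evaluate the confluent version by residues at higher-order poles. That part of your argument is correct and complete.

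Your final sign reconciliation, however, is hand-wavy and in fact does not close. Your residue computation gives
\[
g=\sum_{j=1}^k\frac{1}{(n_j-1)!}\,\frac{\partial^{n_j-1}}{\partial s^{n_j-1}}\!\left[\frac{f(s)(s-x_j)^{n_j}}{\prod_l(s-x_l)^{n_l}}\right]_{s=x_j},
\]
and converting to the $(x_j-s)$ form introduces a factor $(-1)^{n_j-m}=(-1)^{m-n_j}$, so
\[
g=\sum_{j=1}^k\frac{(-1)^{m-n_j}}{(n_j-1)!}\,\frac{\partial^{n_j-1}}{\partial s^{n_j-1}}\!\left[\frac{f(s)(x_j-s)^{n_j}}{\prod_l(x_l-s)^{n_l}}\right]_{s=x_j}.
\]
The lemma as printed carries an additional outer factor $(-1)^{m-1}$, and your sentence ``reproduces the overall $(-1)^{m-1}$ present in the original simple-point definition'' does not justify it. Indeed, specializing the printed multiple-point formula to $n_1=\cdots=n_m=1$ yields $\sum_j f(x_j)/\prod_{l\neq j}(x_l-x_j)$, whereas the printed simple-point definition gives $(-1)^{m-1}$ times this; the two displayed formulas disagree unless $m$ is odd. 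This is a typographical inconsistency in the statement, not a defect in your method: the paper's subsequent applications combine $(-1)^{m-1}(-1)^{m-n_j}=(-1)^{1-n_j}$ and use precisely the formula your residues deliver for the raw sum $\sum_j f(x_j)/\prod_{l\neq j}(x_l-x_j)$. You should state the sign comparison explicitly and note the discrepancy, rather than assert that the signs ``reproduce'' one another.
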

We also need the following two lemmas. The first one is
a well-known formula in interpolation theory, while the second one is part of the proof of Proposition 5.4 in \cite{kolkovskamartin2}.
\begin{lemma}Let $m\ge1$ and let $a_1,\ldots, a_m$ be given different numbers. Then
 \begin{equation}\label{formulainterpolacion}
\sum\limits_{j=1}^m\frac{(a_j-s)^k}{\prod\limits_{l=1,l\neq j}^m(a_l-a_j)}=\left\{
\begin{array}{ll}
 (-1)^{m-1},&k=m-1,\\
 0,&k=0,1,\dots,m-2,\\
 \frac{1}{\prod\limits_{j=1}^m(a_j-s)},&k=-1.
\end{array}
\right.
\end{equation}
\end{lemma}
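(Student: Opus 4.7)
The strategy is to split the proof according to the value of $k$: the nonnegative cases $k=0,1,\dots,m-1$ follow from Lagrange interpolation applied to the polynomial $P(x)=(x-s)^k$, while the case $k=-1$ follows from a partial-fraction decomposition in the variable $s$.

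For the nonnegative cases I would fix $s\in\CC$ and let $P(x)=(x-s)^k$, so that $\deg P\le m-1$. Lagrange interpolation at the $m$ distinct nodes $a_1,\dots,a_m$ yields the identity of polynomials
$$P(x)=\sum_{j=1}^m (a_j-s)^k \prod_{l\neq j}\frac{x-a_l}{a_j-a_l}.$$
I would then compare the coefficient of $x^{m-1}$ on both sides. On the left it equals $1$ when $k=m-1$ and $0$ when $k\le m-2$, while on the right it equals $\sum_{j=1}^m (a_j-s)^k/\prod_{l\neq j}(a_j-a_l)$. Using the sign identity $\prod_{l\neq j}(a_j-a_l)=(-1)^{m-1}\prod_{l\neq j}(a_l-a_j)$ to convert to the denominator in the statement of the lemma then produces the stated values $(-1)^{m-1}$ and $0$, respectively.

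For the case $k=-1$ I would instead consider the rational function $R(s)=\bigl(\prod_{j=1}^m(a_j-s)\bigr)^{-1}$, which, since the $a_j$ are distinct, has only simple poles, located at $s=a_1,\dots,a_j,\dots,a_m$. Writing its partial-fraction expansion as $R(s)=\sum_{j=1}^m c_j/(a_j-s)$ and computing each residue via $c_j=\lim_{s\to a_j}(a_j-s)R(s)=1/\prod_{l\neq j}(a_l-a_j)$ delivers exactly the third case of the formula.

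There is no real obstacle here; the argument is standard and essentially a direct application of Lagrange interpolation and partial fractions. The only point requiring attention is keeping track of the sign factor $(-1)^{m-1}$ that arises from the mismatch between $\prod_{l\neq j}(a_j-a_l)$, which appears naturally in the Lagrange formulation, and $\prod_{l\neq j}(a_l-a_j)$, which appears in the statement; this sign is precisely what produces the $(-1)^{m-1}$ in the $k=m-1$ case.
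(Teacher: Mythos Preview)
Your argument is correct. The paper itself does not prove this lemma; it simply states that it is ``a well-known formula in interpolation theory,'' so your standard derivation via Lagrange interpolation (for $k=0,\dots,m-1$) and partial fractions (for $k=-1$), together with the sign bookkeeping $\prod_{l\neq j}(a_j-a_l)=(-1)^{m-1}\prod_{l\neq j}(a_l-a_j)$, is exactly the kind of proof the paper is implicitly deferring to.
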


In what follows we set $P_1(r)=\prod_{j=1}^N(\alpha_j-r)^{n_j}$.

\begin{lemma}\label{lemaJ0}
 Let $r,r_1,r_2,\dots,r_{m+1}$ be $m+2$ different complex numbers and define
$$J_0(r)=\lambda_1\sum\limits_{j=1}^{m+1}P_1(r_j)\frac{1}{
 \prod\limits_{k=1,i\neq j}^{m+1}(r_k-r_j)}
  \frac{\frac{Q(-r)}{\prod_{l=1}^N(\alpha_l-r)^{n_l}}-\frac{Q(-r_j)}{\prod_{l=1}^N(\alpha_l-r_j)^{n_l}}}
 {r_j-r}.$$ Then $J_0(r)=0$ for all $r\in \mathbb{C}$.
\end{lemma}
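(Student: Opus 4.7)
Set $P_1(r)=\prod_{l=1}^N(\alpha_l-r)^{n_l}$, a polynomial of degree $m$, and write $F(r)=Q(-r)/P_1(r)$. The first step is to use the identity $P_1(r_j)F(r_j)=Q(-r_j)$ to split the summand and get
\begin{align*}
J_0(r)=\lambda_1 F(r)\sum_{j=1}^{m+1}\frac{P_1(r_j)}{(r_j-r)\prod_{k\neq j}(r_k-r_j)}-\lambda_1\sum_{j=1}^{m+1}\frac{Q(-r_j)}{(r_j-r)\prod_{k\neq j}(r_k-r_j)}.
\end{align*}
Both sums will turn out to equal the same quantity, so $J_0(r)=0$.

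To evaluate each sum I would use partial fractions plus the vanishing of the sum of residues at infinity. For the first sum, consider the rational function
$$h(z)=\frac{P_1(z)}{(z-r)\prod_{k=1}^{m+1}(z-r_k)},$$
whose numerator has degree $m$ and whose denominator has degree $m+2$, so $h(z)=O(1/z^2)$ as $z\to\infty$. The residues at the simple poles $r$ and $r_j$ must therefore sum to zero, i.e.\
$$\frac{P_1(r)}{\prod_k(r-r_k)}+\sum_{j=1}^{m+1}\frac{P_1(r_j)}{(r_j-r)\prod_{k\neq j}(r_j-r_k)}=0.$$
After converting $\prod_{k\neq j}(r_j-r_k)=(-1)^m\prod_{k\neq j}(r_k-r_j)$ and $\prod_k(r-r_k)=(-1)^{m+1}\prod_k(r_k-r)$, this rearranges to
$$\sum_{j=1}^{m+1}\frac{P_1(r_j)}{(r_j-r)\prod_{k\neq j}(r_k-r_j)}=\frac{P_1(r)}{\prod_k(r_k-r)},$$
so the first contribution is $\lambda_1 F(r)\,P_1(r)/\prod_k(r_k-r)=\lambda_1 Q(-r)/\prod_k(r_k-r)$.

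The second sum is handled identically with $h$ replaced by $\tilde h(z)=Q(-z)/[(z-r)\prod_k(z-r_k)]$, whose numerator has degree at most $m-1$ and denominator of degree $m+2$, giving $\tilde h(z)=O(1/z^3)$. The same sum-of-residues argument then yields
$$\sum_{j=1}^{m+1}\frac{Q(-r_j)}{(r_j-r)\prod_{k\neq j}(r_k-r_j)}=\frac{Q(-r)}{\prod_k(r_k-r)}.$$
Subtracting the two contributions gives $J_0(r)=0$ on the complement of $\{r_1,\dots,r_{m+1}\}$, and by analytic continuation on all of $\CC$.

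I don't anticipate a genuine obstacle here; the only delicate part is bookkeeping the $(-1)^m$ factors coming from swapping the order of differences $r_k-r_j$ versus $r_j-r_k$. Alternatively, both residue identities can be packaged as instances of Lemma \ref{diferenciasdivididaslabbe} applied to the analytic function $z\mapsto P_1(z)/(z-r)$ and $z\mapsto Q(-z)/(z-r)$ respectively, but the direct partial-fraction argument above seems the most transparent.
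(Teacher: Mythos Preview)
Your argument is correct. The paper does not prove Lemma~\ref{lemaJ0} in-house; it simply records that the identity appears in the proof of Proposition~5.4 in \cite{kolkovskamartin2}. Your residue/partial-fraction argument therefore supplies a clean, self-contained proof where the paper only gives a citation. The two key identities you derive are weighted versions of the $k=-1$ case of the interpolation formula~(\ref{formulainterpolacion}) in the preceding lemma, so your route meshes naturally with the tools already on hand. One cosmetic remark: the phrase ``by analytic continuation on all of $\CC$'' is slightly off, since $F(r)=Q(-r)/P_1(r)$ has poles at the $\alpha_l$; it is cleaner to note that after your computation both contributions equal $\lambda_1 Q(-r)/\prod_k(r_k-r)$, a rational function with no poles at the $\alpha_l$, so the cancellation $J_0(r)=0$ holds for every $r$ at which $J_0$ is defined.
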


The following result is used in case C.

\begin{lemma}\label{lemmatranslationtailminuslaplaceexponent}
Let $\nu_\mathcal{S}$ be the L\'evy measure of a spectrally positive pure jump
L\'evy process such that
\begin{equation}\label{assumptionnuS}
 \int_{0+}^\infty \pai x^2\wedge x\pad\nu_\mathcal{S}(dx)\menor\infty.
\end{equation}
Then
  \begin{equation}\label{igualdadcolanu}
-\Psi_\subo(r)=r\int_{0+}^\infty\pai1-e^{-rx}\pad\overline{\mathcal{V}}_\subo(x)\,dx.
 \end{equation}
 Moreover, for any $r_1,r_2\in\CC_+$ such that $r_1\neq r_2$ and $\Psi_\mathcal{S}$ exists, there holds\begin{equation}\label{igualdadestraslacion}
 \frac{\Psi_\mathcal{S}(r_1)-\Psi_\mathcal{S}(r_2)}{r_2-r_1}=r_2\widehat{T}_{r_2}\overline{\mathcal{V}}_\subo(r_1)-\frac{\Psi_\subo(r_1)}{r_1}=r_1\widehat{T}_{r_2}\overline{\mathcal{V}}_\subo(r_1)-\frac{\Psi_\subo(r_2)}{r_2}.
\end{equation}
\end{lemma}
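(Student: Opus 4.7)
My plan is to establish (\ref{igualdadcolanu}) by a direct Fubini computation, and then to derive (\ref{igualdadestraslacion}) from (\ref{igualdadcolanu}) combined with the Dickson--Hipp identity (\ref{laplaceoperatorT}) through elementary algebra.

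For (\ref{igualdadcolanu}), the integrability hypothesis (\ref{assumptionnuS}) permits me to take the truncation function $h\equiv 1$ in the representation of $\Psi_\subo$, since $1-e^{-rx}-rx=O(x^2)$ as $x\downarrow 0$ and $|1-e^{-rx}-rx|=O(x)$ as $x\to\infty$, both integrable against $\nu_\subo$ by (\ref{assumptionnuS}). Consequently $\Psi_\subo(r)=\int_{0+}^\infty(1-e^{-rx}-rx)\nu_\subo(dx)$. I then expand $\overline{\mathcal{V}}_\subo(x)=\int_{[x,\infty)}\nu_\subo(dy)$ on the right-hand side of (\ref{igualdadcolanu}) and interchange the order of integration (justified by positivity) to obtain
\[
\int_0^\infty(1-e^{-rx})\overline{\mathcal{V}}_\subo(x)\,dx=\int_{0+}^\infty\nu_\subo(dy)\int_0^y(1-e^{-rx})\,dx=\int_{0+}^\infty\left(y-\frac{1-e^{-ry}}{r}\right)\nu_\subo(dy).
\]
Multiplying both sides by $r$ yields $\int_{0+}^\infty(ry-(1-e^{-ry}))\nu_\subo(dy)=-\Psi_\subo(r)$, which is (\ref{igualdadcolanu}).

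For (\ref{igualdadestraslacion}), writing $\Phi(r):=\int_0^\infty e^{-rx}\overline{\mathcal{V}}_\subo(x)\,dx$ and applying (\ref{igualdadcolanu}) at $r_1$ and $r_2$ separately, subtraction gives
\[
\frac{\Psi_\subo(r_2)}{r_2}-\frac{\Psi_\subo(r_1)}{r_1}=\int_{0+}^\infty(e^{-r_1x}-e^{-r_2x})\overline{\mathcal{V}}_\subo(x)\,dx=\Phi(r_2)-\Phi(r_1).
\]
The first identity in (\ref{laplaceoperatorT}) with $f=\overline{\mathcal{V}}_\subo$ yields $\widehat{T}_{r_2}\overline{\mathcal{V}}_\subo(r_1)=(\Phi(r_1)-\Phi(r_2))/(r_2-r_1)$. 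Substituting into $r_2\widehat{T}_{r_2}\overline{\mathcal{V}}_\subo(r_1)-\Psi_\subo(r_1)/r_1$ and collecting terms over the common denominator $r_1(r_2-r_1)$ reduces the expression to $(\Psi_\subo(r_1)-\Psi_\subo(r_2))/(r_2-r_1)$; the analogous calculation with prefactor $r_1$ and subtrahend $\Psi_\subo(r_2)/r_2$ produces the same quantity, proving both equalities.

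The hardest step is the first: justifying that the truncation function may be chosen as $h\equiv 1$ so that $\Psi_\subo$ admits the clean integral representation used above. This depends entirely on (\ref{assumptionnuS}). Once past this point, what remains is a routine Fubini interchange and a transparent rational rearrangement using only (\ref{laplaceoperatorT}); no further fluctuation-theoretic input is needed, which is why this lemma can serve as a pure analytic ingredient in the Case C portion of the proof of Lemma \ref{lemmanegativeWHfactor}.
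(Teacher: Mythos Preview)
Your proposal is correct and follows essentially the same route as the paper: use (\ref{assumptionnuS}) to take $h\equiv 1$, apply Fubini to obtain (\ref{igualdadcolanu}), establish $\Psi_\subo(r_1)/r_1-\Psi_\subo(r_2)/r_2=\widehat{\overline{\mathcal{V}}}_\subo(r_1)-\widehat{\overline{\mathcal{V}}}_\subo(r_2)$, and combine this with (\ref{laplaceoperatorT}) and an elementary algebraic rearrangement. One slip to fix in your displayed subtraction: the middle integral $\int_{0+}^\infty(e^{-r_1x}-e^{-r_2x})\overline{\mathcal{V}}_\subo(x)\,dx$ equals $\Phi(r_1)-\Phi(r_2)$ (and hence $\Psi_\subo(r_1)/r_1-\Psi_\subo(r_2)/r_2$), not $\Phi(r_2)-\Phi(r_1)$; the outer identity you use afterward and the rest of the argument are unaffected.
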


\begin{proof}
From assumption  (\ref{assumptionnuS}) we can restrict to $h\equiv1$, and by applying  Fubini's theorem we get (\ref{igualdadcolanu}). On
the other hand, we have:
\begin{align}
 \frac{\Psi_\mathcal{S}(r_1)-\Psi_\mathcal{S}(r_2)}{r_2-r_1}
&=\frac{r_1\frac{\Psi_\mathcal{S}(r_1)}{r_1}-r_2\frac{\Psi_\mathcal{S}(r_2)}{r_2}}{r_2-r_1}=\frac{r_1-r_2}{r_2-r_1}\frac{\Psi_\mathcal{S}(r_1)}{r_1}+r_2\frac{\frac{\Psi_\mathcal{S}(r_1)}{r_1}-\frac{\Psi_\mathcal{S}(r_2)}{r_2}}{r_2-r_1}\nonumber \\
&=r_2\frac{\frac{\Psi_\mathcal{S}(r_1)}{r_1}-\frac{\Psi_\mathcal{S}(r_2)}{r_2}}{r_2-r_1}-\frac{\Psi_\mathcal{S}(r_1)}{r_1}.\label{igualdadPsiS}
 \end{align}
Similarly,
\begin{align}
 \frac{\Psi_\mathcal{S}(r_1)-\Psi_\mathcal{S}(r_2)}{r_2-r_1}&=r_1\frac{\frac{\Psi_\mathcal{S}(r_1)}{r_1}-\frac{\Psi_\mathcal{S}(r_2)}{r_2}}{r_2-r_1}-\frac{\Psi_\mathcal{S}(r_2)}{r_2},\label{igualdadPsiS2}
 \end{align}
and
\begin{align}
  \frac{\Psi_\mathcal{S}(r_1)}{r_1}-\frac{\Psi_\mathcal{S}(r_2)}{r_2}&=\int_{0+}^\infty\ci\frac{1-e^{-r_1x}-r_1x}{r_1}-\frac{1-e^{-r_2x}-r_2x}{r_2}\cd\nu_\mathcal{S}(dx)\nonumber\\
  &=\int _{0+}^\infty\int _0^x \ci e^{-r_1y}-1-\pai e^{-r_2y}-1\pad\cd dy\nu_\mathcal{S}(dx)=\int _{0+}^\infty\int _y^\infty \nu_\mathcal{S}(dx)\ci e^{-r_1y}-e^{-r_2y}\cd dy\nonumber\\
  &=\widehat{\overline{\mathcal{V}}}_\mathcal{S}(r_1)-\widehat{\overline{\mathcal{V}}}_\mathcal{S}(r_2),\nonumber
  \end{align}
 where the third equality follows from Fubini's theorem.
  Substituting the last equality in
  (\ref{igualdadPsiS}) and (\ref{igualdadPsiS2}), and using (\ref{laplaceoperatorT}), we obtain the result.
\end{proof}

Now we are ready to prove Lemma \ref{lemmanegativeWHfactor}.

\begin{proofoflemanegativeWHfactor}
We deal only with the case $q\mayor0$. The case $q=0$
 follows by taking limits when $q\downarrow0$ and that the limit $\lim\limits_{q\downarrow0}\beta_j(q)$ exists and 
$ \lim\limits_{q\downarrow0}\frac{q}{\beta_1(q)}=\E[\x(1)]$.

The proof of the lemma is simpler when the roots $\beta_j(q)$ of the generalized Lundberg equation are simple (see equality (6.11) below). In case of multiple roots we will approximate the Lundberg equation by an Lundberg equation  depending on parameter $\epsilon$ which has  simple roots, and such that when $\epsilon \to 0,$ the roots of the approximating Lundberg equation approximate the multiple roots of the given equation. At the end of the proof we take $\epsilon \to 0$ to obtain Lemma 5 in case of multiple roots $\beta_j(q).$ 

First we obtain the result for the case C. Recall that $\beta_j(q)$ are the roots of the
generalized Lundberg function  of $\x$. Let $\varepsilon\in\pai 0,E\pad$,
where
$$E=\min\left\{\left|Re\pai\beta_i(q)\pad-Re\pai\beta_j(q)\pad\right|: Re\pai\beta_i(q)\pad-Re\pai\beta_j(q)\pad\neq0 \right\},$$
and define the complex numbers
\begin{equation}\label{rhoasterisco}
\begin{array}{l}
\beta_1(q)^*=\beta_1(q),
\beta_2(q)^*=\beta_2(q) + \frac{1}{m+1}\varepsilon,
\dots, \beta_{k_1}(q)^*=\beta_1(q)+\frac{k_1-1}{m+1}\varepsilon,\\ $\,$ \\ \beta_{k_1+1}(q)^*=\beta_2(q),\dots, \beta_{k_1+k_2}(q)^*=\beta_2(q)+\frac{k_1+k_2-1}{m+1}\varepsilon,\\
\beta_{k_1+\dots+k_{R-1}+1}(q)^*=\beta_R(q),\dots, \beta_{m+1}(q)^*=\beta_R(q)+\frac{\sum\limits_jk_j-1}{m+1}\varepsilon,
\end{array}
\end{equation}
where we have omitted the dependence on $\varepsilon$ for simplicity. It follows that 
$\lim\limits_{\varepsilon\rightarrow 0}\beta_{l_j+a_j}(q)^*=\beta_j(q),\ j=1,2,\dots,R,$
for $l_1=0,l_2=k_1,\dots,l_R=k_{R-1}$ and $a_j=1,2,\dots,k_j$.

This gives $m+1$ different numbers $\beta_1^*,\dots,\beta_{m+1}^*$
such that, as $\varepsilon\downarrow0$, the first $k_1$ numbers converge to $\beta_1(q)$,
the next $k_2$ numbers converge to $\beta_2(q)$, and so on.

From the definition of $\Psi_\x$,
\begin{align}
\Psi_\x\ci \beta_j(q)^* \cd-q&=-\Psi_\mathcal{S}\ci \beta_j(q)^* \cd+\lambda_1\frac{Q\pai -\beta_j(q)^*\pad}{\prod_{l=1}^N\pai \alpha_l-\beta_j(q)^*\pad^{n_l}} +c\beta_j(q)^* +\gamma^2\ci\beta_j(q)^* \cd^2-q-\lambda_1. \nonumber
\end{align}
Therefore, for each $j=1,2,\dots,m+1$ we obtain
\begin{align}
\lambda_1+q&=-\Psi_\mathcal{S}\ci \beta_j(q)^* \cd+\lambda_1\frac{Q\pai -\beta_j(q)^*\pad}{\prod_{l=1}^N\pai \alpha_l-\beta_j(q)^*\pad^{n_l}}+c\beta_j(q)^* +\gamma^2\ci\beta_j(q)^* \cd^2-\pai \Psi_\x\ci \beta_j(q)^* \cd-q\pad,\nonumber
\end{align}
which yields
\begin{align}
 \Psi_\x(r)-q&=-\Psi_\mathcal{S}(r)+\lambda_1\frac{Q\pai -r\pad}{\prod_{l=1}^N\pai \alpha_l-r\pad^{n_l}}+cr+\gamma^2 r^2+  \Psi_\mathcal{S}\ci \beta_j(q)^* \cd-\lambda_1\frac{Q\pai -\beta_j(q)^*\pad}{\prod_{l=1}^N\pai \alpha_l-\beta_j(q)^*\pad^{n_l}}\nonumber \\
&-c\beta_j(q)^* -\gamma^2\ci\beta_j(q)^* \cd^2+ \Psi_\x\ci \beta_j(q)^* \cd-q. \label{laplaceconrhoepsilon}
\end{align}

Since $P_1$ is a polynomial with
degree $m$, using Lagrange interpolation we obtain  the equivalent representation
$$P_1(r)=\sum\limits_{l=1}^{m+1}\frac{\prod\limits_{j=1}^N\ci \alpha_j-\beta_l(q)^* \cd^{n_j}}{\prod\limits_{j\neq l}\ci\beta_j(q)^* -\beta_l(q)^* \cd}\prod\limits_{j\neq l}\ci\beta_j(q)^* -r\cd.$$
This and (\ref{laplaceconrhoepsilon}) give
\begin{align}
 \ci \Psi_\x(r)-q\cd P_1(r)
 &=\sum\limits_{l=1}^{m+1}\frac{P_1\ci\beta_l(q)^* \cd}{\prod\limits_{j\neq l}\ci\beta_j(q)^* -\beta_l(q)^* \cd}\prod\limits_{j\neq l}\ci\beta_j(q)^* -r\cd\left\{-(\beta_l(q)^* -r)\frac{\Psi_\mathcal{S}(r)-\Psi_\mathcal{S}(\beta_l(q)^* )}{\beta_l(q)^* -r}\right.\nonumber \\
 &+\left.\phantom{\int_0^{\int}}(\beta_l(q)^* -r)\pai\lambda_1 J_0^*-c-\gamma^2(\beta_l(q)^* +r)\pad+ \Psi_\x(\beta_l(q)^* )-q\right\}\nonumber \\
 &=\prod\limits_{j=1}^{m+1}\ci\beta_j(q)^* -r\cd\sum\limits_{l=1}^{m+1}\frac{P_1\ci\beta_l(q)^* \cd}{\prod\limits_{j\neq l}\ci\beta_j(q)^* -\beta_l(q)^* \cd}\left\{-\frac{\Psi_\mathcal{S}(r)-\Psi_\mathcal{S}(\beta_l(q)^* )}{\beta_l(q)^* -r}\right.\nonumber \\
 &\left.+\lambda_1 J_0^*-c-\gamma^2(\beta_l(q)^* +r)+\frac{ \Psi_\x(\beta_l(q)^* )-q}{\beta_l(q)^* -r}\right\},\label{igualdadconinterpolaciondelagrange}
\end{align}
where $J_0^*=\frac{\frac{Q(-r)}{\prod\limits_{j=1}^N(\alpha_j-r)^{n_j}}-\frac{Q(-\beta_l(q)^*)}{\prod\limits_{j=1}^N(\alpha_j-\beta_l(q)^*)^{n_j}}}{\beta_l(q)^* -r}$.
Formula (\ref{formulainterpolacion}) and Lemma \ref{lemaJ0} imply, respectively:
\begin{equation}\label{suma1}
\begin{array}{ccc}
 \sum\limits_{l=1}^{m+1}\frac{P_1\ci\beta_l(q)^* \cd}{\prod\limits_{j\neq l}\ci\beta_j(q)^* -\beta_l(q)^* \cd}=1&\text{ and }&\lambda_1\sum\limits_{l=1}^{m+1}\frac{P_1\ci\beta_l(q)^* \cd}{\prod\limits_{j\neq l}\ci\beta_j(q)^* -\beta_l(q)^* \cd}J_0^*=0.
\end{array}
\end{equation}
Substituting these two equalities
in (\ref{igualdadconinterpolaciondelagrange}), using the first equality in (\ref{igualdadestraslacion})
to calculate $\frac{\Psi_\mathcal{S}(r)-\Psi_\mathcal{S}(\beta_j(q)^* )}{\beta_j(q)^* -r}$
and dividing by $\prod_{j=1}^{m+1}\ci\beta_j(q)^* -r\cd$, we obtain
\begin{align}
&\ci \Psi_\x(r)-q\cd \frac{P_1(r)}{\prod_{j=1}^{m+1}\ci\beta_j(q)^* -r\cd}\nonumber \\
&=\sum\limits_{l=1}^{m+1}\frac{P_1\ci\beta_l(q)^* \cd}{\prod\limits_{j\neq l}\ci\beta_j(q)^* -\beta_l(q)^* \cd}\Bigg\{-\beta_l(q)^* \widehat{T}_{\beta_l(q)^* }\overline{\mathcal{V}}_\mathcal{S}(r)+\frac{\Psi_\mathcal{S}(r)}{r}-c
-\gamma^2\ci r+\beta_l(q)^* \cd
+\frac{ \Psi_\x\ci\beta_l(q)^* \cd-q}{\beta_j(q)^* -r}\Bigg\}\nonumber \\
&=\frac{\Psi_\mathcal{S}(r)}{r}-c-\gamma^2r-
\sum\limits_{l=1}^{m+1}\frac{P_1\ci\beta_l(q)^* \cd}{\prod\limits_{j\neq l}\ci\beta_j(q)^* -\beta_l(q)^* \cd}\Bigg\{\beta_l(q)^* \widehat{T}_{\beta_l(q)^* }\overline{\mathcal{V}}_\mathcal{S}(r)+\gamma^2\beta_l(q)^*
-\frac{ \Psi_\x\ci\beta_l(q)^* \cd-q}{\beta_j(q)^* -r} \Bigg\}, \label{denominadorcasoC0}
\end{align}
where the last equality follows from (\ref{suma1}).

Using Lemma \ref{lemmakyprianou} and Theorem 2.2 in \cite{lewismordecki} we obtain $\lim\limits_{\varepsilon\downarrow0}\frac{P_1(r)}{\prod_{j=1}^{m+1}\ci\beta_j(q)^* -r\cd}=\pai\kappa(q,-r)\pad^{-1}$. Hence we let $\varepsilon\downarrow0$ in both sides of
(\ref{denominadorcasoC0} and apply Lemma \ref{diferenciasdivididaslabbe}. This yields:
\begin{align}
 &\ci \Psi_\x(r)-q\cd \pai\kappa(q,-r)\pad^{-1}\nonumber \\
&=\frac{\Psi_\mathcal{S}(r)}{r}-c-\gamma^2r-\sum\limits_{l=1}^R\frac{(-1)^{1-k_l}}{(k_l-1)!}\frac{\partial^{k_l-1}}{\partial s^{k_l-1}}\ci\frac{\prod\limits_{j=1}^N(\alpha_j-s)^{n_j}(\beta_l(q)-s)^{k_l}}{\prod\limits_{j=1}^R(\beta_j(q)-s)^{k_j}} s\widehat{T}_s\overline{\mathcal{V}}_\mathcal{S}(r)\cd_{s=\beta_l(q)}\nonumber \\
&-\gamma^2\sum\limits_{l=1}^R\frac{(-1)^{1-k_l}}{(k_l-1)!}\frac{\partial^{k_l-1}}{\partial s^{k_j-1}}\ci\frac{\prod\limits_{j=1}^N(\alpha_j-s)^{n_j}(\beta_l(q)-s)^{k_l}}{\prod\limits_{j=1}^R(\beta_j(q)-s)^{k_j}} s\cd_{s=\beta_l(q)}\nonumber \\
&+\sum\limits_{l=1}^R\frac{(-1)^{1-k_l}}{(k_l-1)!}\frac{\partial^{k_l-1}}{\partial s^{k_l-1}}\ci\frac{\prod\limits_{j=1}^N(\alpha_j-s)^{n_j}(\beta_l(q)-s)^{k_l}}{\prod\limits_{j=1}^R(\beta_j(q)-s)^{k_j}}\frac{ \Psi_\x(s)-q}{s-r}\cd_{s=\beta_l(q)}.\nonumber
\end{align}
Since, for $j=1,2,\dots,R$, $\beta_j(q)$  are roots of
$ \Psi_\x(s)-q=0$ in $\CC_{++}$
with respective multiplicities $k_j,$ it follows from the Leibniz rule
that
$$\sum\limits_{l=1}^R\frac{(-1)^{1-k_l}}{(k_l-1)!}\frac{\partial^{k_l-1}}{\partial s^{k_l-1}}\ci\frac{P_1(s)(\beta_l(q)-s)^{k_l}}{\prod\limits_{j=1}^R(\beta_j(q)-s)^{k_j}}\frac{ \Psi_\x(s)-q}{s-r}\cd_{s=\beta_l(q)}=0.$$
Hence, substituting this in the equality above and setting $D_q=\sum\limits_{l=1}^R\frac{(-1)^{1-k_l}}{(k_l-1)!}\frac{\partial^{k_l-1}}{\partial s^{k_j-1}}\ci\frac{\prod\limits_{j=1}^N(\alpha_j-s)^{n_j}(\beta_l(q)-s)^{k_l}}{\prod\limits_{j=1}^R(\beta_j(q)-s)^{k_j}} s\cd_{s=\beta_l(q)}$,
we obtain:
\begin{align}
&\ci q- \Psi_\x(r)\cd \pai\kappa(q,-r)\pad^{-1}\nonumber \\
&=c+\gamma^2D_q+\gamma^2r-\frac{\Psi_\mathcal{S}(r)}{r}
+\sum\limits_{l=1}^R\frac{(-1)^{1-k_l}}{(k_l-1)!}\frac{\partial^{k_l-1}}{\partial s^{k_l-1}}\ci\frac{\prod\limits_{j=1}^N(\alpha_j-s)^{n_j}(\beta_l(q)-s)^{k_l}}{\prod\limits_{j=1}^R(\beta_j(q)-s)^{k_j}} s\widehat{T}_s\overline{\mathcal{V}}_\mathcal{S}(r)\cd_{s=\beta_l(q)}.\label{denominadorcasoC}
\end{align}
Using Leibniz rule and Lemma \ref{lemaderivadaseintegrales}
we get
\begin{align}
&\sum\limits_{l=1}^R\sum\limits_{a=0}^{k_l-1}\binom{k_l-1}{a}\frac{(-1)^{1-k_l}}{(k_l-1)!}\frac{\partial^{k_l-1-a}}{\partial s^{k_l-1-a}}\ci\frac{\prod\limits_{j=1}^N(\alpha_j-s)^{n_j}(\beta_l(q)-s)^{k_l}}{\prod\limits_{j=1}^R(\beta_j(q)-s)^{k_j}}s\cd_{s=\beta_l(q)}\frac{\partial^a}{\partial s^a} \ci\widehat{T}_s\overline{\mathcal{V}}_\mathcal{S}(r)\cd_{s=\beta_l(q)}\nonumber \\
&=\sum\limits_{l=1}^R\sum\limits_{a=0}^{k_l-1}E_*(l,a,q)\widehat{\mathcal{T}}_{\beta_l(q),a}\overline{\mathcal{V}}_\mathcal{S}(r)=\sum\limits_{l=1}^R\sum\limits_{a=0}^{k_l-1}E_*(l,a,q)\widehat{\mathcal{T}}_{\beta_l(q),a}\overline{\mathcal{V}}_\mathcal{S}(r)
=\widehat{\EL}_q(r),\nonumber
\end{align}
hence  from (\ref{denominadorcasoC}) we obtain
$$\ci q- \Psi_\x(r)\cd\pai\kappa(q,-r)\pad^{-1}=c+\gamma^2D_q+\gamma^2r-\frac{\Psi_\mathcal{S}(r)}{r}+\widehat{\EL}_q(r).$$
Since by L'H\^opital's rule
$\lim\limits_{r\downarrow0}\frac{\Psi_\mathcal{S}(r)}{r}=0$, it
follows that
$a(q)=c+\gamma^2D_q+\widehat{\EL}_q(0)$. Hence
\begin{align}
 \ci q- \Psi_\x(r)\cd\pai\kappa(q,-r)\pad^{-1}
 &=a(q)-\widehat{\EL}_q(0)+\gamma^2r-\frac{\Psi_\mathcal{S}(r)}{r}+\widehat{\EL}_q(r)&\nonumber \\
 &=a(q)+\gamma^2r-\frac{\Psi_\mathcal{S}(r)}{r}-\pai \widehat{\EL}_q(0)-\widehat{\EL}_q(r)\pad,\nonumber
\end{align}
and we obtain the result for case C.

To obtain the result for case B, we use the same notations as above for the $m+1$ roots of the generalized Lundberg equation, and use (\ref{igualdadconinterpolaciondelagrange})
with
$\Psi_\subo(r)$ replaced by $G_\subo(r)$ and setting $\gamma=0$.
This gives:
\begin{align}
&\ci \Psi_\x(r)-q\cd P_1(r)\nonumber \\
&=\prod\limits_{j=1}^{m+1}\ci\beta_j(q)^* -r\cd\sum\limits_{l=1}^{m+1}\frac{P_1\ci\beta_l(q)^* \cd}{\prod\limits_{j\neq l}\ci\beta_j(q)^* -\beta_l(q)^* \cd}\left\{-\frac{G_\subo(r)-G_\subo(\beta_l(q)^* )}{\beta_l(q)^* -r}+\lambda_1 J_0^*-c+\frac{ \Psi_\x(\beta_l(q)^* )-q}{\beta_l(q)^* -r}\right\}\nonumber \\
&=-c+\prod\limits_{j=1}^{m+1}\ci\beta_j(q)^* -r\cd\sum\limits_{l=1}^{m+1}\frac{P_1\ci\beta_l(q)^* \cd}{\prod\limits_{j\neq l}\ci\beta_j(q)^* -\beta_l(q)^* \cd}\left\{\widehat{T}_{\beta_l(q)^* }\nu_\subo(r)+\frac{ \Psi_\x(\beta_l(q)^* )-q}{\beta_l(q)^* -r}\right\},\nonumber
\end{align}
where in the second equality we used (\ref{suma1}) and the fact that
$\frac{G_\mathcal{S}(r)-G_\mathcal{S}(s)}{s-r}=-\widehat{T}_s\nu_\mathcal{S}(r),$
which follows from the second equality in (\ref{laplaceoperatorT}).
Proceeding as in case C, we obtain
\begin{align}
&\ci q- \Psi_\x(r)\cd \pai\kappa(q,-r)\pad^{-1}\nonumber\\
  &=c-\sum\limits_{l=1}^R\sum\limits_{a=0}^{k_l-1}\binom{k_l-1}{a}\frac{(-1)^{1-k_l+a}}{(k_l-1)!}\frac{\partial^{k_l-1-a}}{\partial s^{k_l-1-a}}\ci\frac{\prod\limits_{j=1}^N(\alpha_j-s)^{n_j}(\beta_l(q)-s)^{k_l}}{\prod\limits_{j=1}^R(\beta_j(q)-s)^{k_j}}\cd_{s=\beta_l(q)}\widehat{\mathcal{T}}_{\beta_l(q),a}\nu_\subo(r)\nonumber \\
&=c-\widehat{\ell}_q(r),\nonumber 
\end{align}
Now we set $r=0$ in the above equality to obtain
\begin{equation}\label{igualdadadelta}
 c=a(q)+\widehat{\ell}_q(0).
\end{equation}
This gives the result for case B.

In case A we have
$\Psi_{\x}(r)=\lambda_1\pai\widehat{f}_1(-r)-1\pad-G_\mathcal{S}(r)$. For now we assume that $\subo$ is not a compound Poisson process.
In this case we know from Lemma \ref{lemmarootsofGLEgeneralcase} that the equation
$\Psi_{\x}(r)-q=0$ has only $m$ roots in $\CC_{++}$, denoted as before and such that they have respective multiplicities $k_1\equiv1,k_2,\dots,k_R$, where $\sum_{j=1}^R k_j=m$. Now we consider the numbers $\beta_1^*(q),\dots,\beta_m(q)^*$ as
defined in (\ref{rhoasterisco}) with $\beta_{m+1}(q)^*$ replaced by $\beta_\infty(n)=\sqrt{n}$.
We also take $c_n=\frac{1}{n}$.
In this case, the function $\Psi_\x(r)+c_nr$ is the generalized Lundberg function of some
L\'evy process of the form (\ref{Xalpha}) with $\gamma=0$ and drift term $c_n$. Hence we
can use (\ref{igualdadconinterpolaciondelagrange}) with $\gamma=0$, $G_\subo$ instead of
$\Psi_\subo$ and $c_n$ instead of $c$. This gives
\begin{align}
&\ci \Psi_\x(r)+c_nr-q\cd P_1(r)\nonumber \\
 &=(\beta_\infty(n)-r)\prod\limits_{k=1}^m\ci\rho^*_{k,q}-r\cd\sum\limits_{l=1}^m\frac{P_1\ci\beta_l(q)^*\cd}{(\beta_\infty(n)-\beta_l(q)^*)\prod\limits_{j\neq l}\ci\beta_j(q)^*-\beta_l(q)^*\cd}
 \left\{-\frac{G_\subo(r)-G_\subo(\beta_l(q)^*)}
 {\beta_l(q)^*-r}\right.\nonumber \\
 &\left.+\lambda_1 \frac{\frac{Q(-r)}{\prod_{a=1}^N(\alpha_a-r)^{n_a}}-
 \frac{Q(-\beta_l(q)^*)}
 {\prod_{a=1}^N(\alpha_a-\beta_l(q)^*)^{n_a}}}{\beta_l(q)^*-r}
 -c_n+\frac{\Psi_\x(\beta_l(q)^*)-q}{\beta_l(q)^*-r}
 \right\}\nonumber \\
 &+(\beta_\infty(n)-r)\prod\limits_{k=1}^m\ci\rho^*_{k,q}-r\cd\frac{P_1\ci\beta_\infty(n)\cd}{\prod\limits_{j=1}^m\ci\beta_j(q)^*-\beta_\infty(n)\cd}\Bigg\{-\frac{G_\subo(r)-G_\subo(\beta_\infty(n))}{\beta_l(q)^*-r}\nonumber \\
 &+\lambda_1 \frac{\frac{Q(-r)}{\prod_{a=1}^N(\alpha_a-r)^{n_a}}-\frac{Q(-\beta_\infty(n))}{\prod_{a=1}^N(\alpha_a-\beta_\infty(n))^{n_a}}}{\beta_\infty(n)-r}
 -c_n+\frac{\Psi_\x(\beta_\infty(n))-q}{\beta_\infty(n)-r}
 \Bigg\}.\nonumber
\end{align}
Again, proceeding as in  case C it follows that
 \begin{align}
&\ci \Psi_\x(r)+c_nr-q\cd \frac{P_1(r)}{\prod_{j=1}^m\ci\beta_j(q)^*-r\cd}
=-c_n(\beta_\infty(n)-r)\nonumber \\
&+\sum\limits_{l=1}^m\frac{\beta_\infty(n)-r}{\beta_\infty(n)-\beta_l(q)^*}\frac{P_1\ci\beta_l(q)^*\cd}{\prod\limits_{j\neq l}\ci\beta_j(q)^*-\beta_l(q)^*\cd}\Bigg\{\widehat{T}_{\beta_l(q)^*}\nu_\mathcal{S}(r)+\frac{\Psi_\x\ci\beta_l(q)^*\cd-q}{\beta_l(q)^*-r}\Bigg\}\nonumber \\
&+\frac{P_1\ci\beta_\infty(n)\cd}{\prod\limits_{j=1}^m\ci\beta_j(q)^*-\beta_\infty(n)\cd}\left\{\pai\beta_\infty(n)-r\pad\widehat{T}_{\beta_\infty(n)}\nu_\mathcal{S}(r)+\pai \Psi_\x\ci\beta_\infty(n)\cd-q\pad\right\}\nonumber\\
&=-c_n(\beta_\infty(n)-r)+\sum\limits_{l=1}^m\frac{\beta_\infty(n)-r}{\beta_\infty(n)-\beta_l(q)^*}\frac{P_1\ci\beta_l(q)^*\cd}{\prod\limits_{j\neq l}\ci\beta_j(q)^*-\beta_l(q)^*\cd}\Bigg\{\widehat{T}_{\beta_l(q)^*}\nu_\mathcal{S}(r)
\nonumber \\
&+\frac{\Psi_\x\ci\beta_l(q)^*\cd-q}{\beta_l(q)^*-r}\Bigg\}
+\frac{P_1\ci\beta_\infty(n)\cd}{\prod\limits_{j=1}^m\ci\beta_j(q)^*-\beta_\infty(n)\cd}\Bigg\{-G_\mathcal{S}(r)+G_\mathcal{S}(\beta_\infty(n))
\nonumber \\
&+\pai \Psi_\x\ci\beta_\infty(n)\cd-q\pad\Bigg\},\nonumber
\end{align}
where in the second equality we have used that $(\beta_\infty(n)-r)\widehat{T}_{\beta_\infty(n)}\nu_S(r)=(\beta_\infty(n)-r)\pai\frac{-G_\subo(r)+G_\subo(\beta_\infty(n))}{\beta_\infty(n)-r}\pad$.
Now we substitute $\Psi_\x(\beta_\infty(n))=\lambda_1\pai\frac{Q(-\beta_\infty(n))}{\prod_{a=1}^N(\alpha_a-\beta_\infty(n))^{n_a}}-1\pad+c_n\beta_\infty(n)-G_\mathcal{S}(\beta_\infty(n))$ in the above equality and rearrange terms to obtain
\begin{align}&\ci \Psi_\x(r)+c_nr-q\cd \frac{P_1(r)}{\prod_{j=1}^m\ci\beta_j(q)^*-r\cd}\nonumber \\
&=-c_n(\beta_\infty(n)-r)+\sum\limits_{l=1}^m\frac{\beta_\infty(n)-r}{\beta_\infty(n)-\beta_l(q)^*}\frac{P_1\ci\beta_l(q)^*\cd}{\prod\limits_{j\neq l}\ci\beta_j(q)^*-\beta_l(q)^*\cd}\Bigg\{\widehat{T}_{\beta_l(q)^*}\nu_\mathcal{S}(r) \nonumber \\
&+\frac{\Psi_\x\ci\beta_l(q)^*\cd-q}{\beta_l(q)^*-r}\Bigg\}-\frac{P_1\ci\beta_\infty(n)\cd}{\prod\limits_{j=1}^m\ci\beta_j(q)^*-\beta_\infty(n)\cd}G_\mathcal{S}(r)\nonumber \\
&+\frac{P_1\ci\beta_\infty(n)\cd}{\prod\limits_{j=1}^m\ci\beta_j(q)^*-\beta_\infty(n)\cd}\Bigg\{c_n\beta_\infty(n)+\lambda_1\frac{Q(-\beta_\infty(n))}{\prod_{a=1}^N(\alpha_a-\beta_\infty(n))^{n_a}}-\pai\lambda_1+q\pad\Bigg\} .\label{denominatorXcetagamma0}
\end{align}
Since both polynomials $P_1(r)=\prod_{j=1}^N(\alpha_j-r)^{n_j}$ and $\prod_{j=1}^m\pai\beta_j(q)^*-r\pad$
have degree $m$, and since $Q(r)$ given in (\ref{laplacef1}) is a polynomial of
degree at most $m-1$, it follows that for any fixed $r$ and $s$ such that $r\neq \beta_\infty(n)$ and $s\neq \beta_\infty(n)$,
 $$
  \lim\limits_{n\rightarrow\infty}\frac{\beta_\infty(n)-r}{\beta_\infty(n)-s}=1,\quad \lim\limits_{n\rightarrow\infty}\frac{P_1(\beta_\infty(n))}{\prod_{j=1}^m\pai\beta_j(q)^*-\beta_\infty(n)\pad}=1,\quad \lim\limits_{n\rightarrow\infty}\frac{Q(-\beta_\infty(n))}{\prod_{a=1}^N(\alpha_a-\beta_\infty(n))^{n_a}}=0.
$$
Hence, letting $n\rightarrow\infty$ in both sides of (\ref{denominatorXcetagamma0}) we get
\begin{align}
&\ci q-\Psi_\x(r)\cd\frac{\prod_{j=1}^N(\alpha_j-r)^{n_j}}{\prod_{j=1}^m\pai\beta_j(q)^*-r\pad}\nonumber \\
&=-\sum\limits_{l=1}^m\frac{P_1\ci\beta_l(q)^*\cd}{\prod\limits_{j\neq l}\ci\beta_j(q)^*-\beta_l(q)^*\cd}\Bigg\{\widehat{T}_{\beta_l(q)^*}\nu_\mathcal{S}(r)+\frac{\Psi_\x\ci\beta_l(q)^*\cd-q}{\beta_l(q)^*-r}\Bigg\}+G_\mathcal{S}(r)+\lambda_1+q.\nonumber
\end{align}
The remaining part of the proof is done similarly as in   cases B and C using Lemma \ref{lemmakyprianou}.\hfill
The case when $\subo$ is a compound Poisson process with $\E\ci \subo(1)\cd>0$ is obtained from case B as follows: 

when $c=0$ we have 
$\Psi_\x'(r)|_{r=0+}=\lambda_1\mu_1-\lambda_2\mu_2,$
hence $\psi_\x(r)-q=0$ has $m$ roots under the assumption that $\lambda_1\mu_1-\lambda_2\mu_2>0$.

If $\x_c$ denotes the process $\x$ with drift $c>0$ when $\subo$ is a subordinator and $\x$ denotes the same process with $c=0$ we have 
$\psi_{\x_c}(r)\to\psi_\x(r)$, for all $r>0$ when $c\downarrow0$. 

This implies that the roots of $\psi_{\x_c}(r)-q$ must converge to those of $\psi_{\x_c}(r)-q$, but since the latter function has only $m$ roots, it must hold that one of the roots of $\psi_{\x_c}(r)-q$ (say $\beta_{m+1}(q)$) must converge to infinite when $c\downarrow0$. Hence the result is obtained by the same procedure as before, replacing $c_n$ by $c$ and $\beta_\infty(n)$ by $\beta_{m+1}(q)$.
\end{proofoflemanegativeWHfactor}

{\noindent\bf Acknowledment\ } The authors are grateful to two anonymous referees for their careful reading of the paper and for many useful suggestions which greatly improved the presentation of the results. The first-named author appreciates partial support from CONACyT Grant No. 257867.

\end{document}